\newtheorem{thm}{Theorem}
\newtheorem{lem}{Lemma}
\theoremstyle{definition}
\newtheorem{Def}{Definition}
\newcommand{\arr}{\ar@{-}}
\renewcommand\1{\bm{1}}
\newcommand\x{\bm{x}}
\newcommand{\C}{\mathcal{C}}
\newcommand{\justarc}{\xymatrix@R1pt@C1pt{&&\\ \ar@{-} `ru/2pt[rr] [rr]&&}}
\newcommand{\rone}{\xymatrix@R.1pt@C.1pt{&&&\\&&&\\&&&\\ %
\ar@{-} `ru/2pt[ruuu] |!{[uu];[rr]}\hole `/2pt[uu] `/2pt[ru] [rr]&& %
}}
\newcommand{\F}{\mathbb{F}}
\newcommand{\sumallx}{\sum_{ %
\xy %
*[*.7]\xybox{ %
*+{a}*\frm{o}, !RU(1.2) *[*.8]{\x}} %
\endxy}}
\newcommand{\merge}[2]{\xy %
*++{} *\frm{o} ="bp", !D *{\bullet}, "bp"!RU(1.2) *[*.8]{#1}, "bp"; /r 30pt/ *++{} *\frm{o} ="other" **\dir{-} , "other"!RU(1.2) *[*.8]{#2} %
\endxy}
\title{The Bar-Natan Theory Splits}
\author{Yuval Wigderson\\Princeton University\\ \url{yuvalw@princeton.edu}}
\date{}
\begin{document}

\maketitle

\begin{abstract}
We show that over the binary field $\F_2$, the Bar-Natan perturbation of Khovanov homology splits as the direct sum of its two reduced theories, which we also prove are isomorphic. This extends Shumakovitch's analogous result for ordinary Khovanov homology, without the perturbation.
\end{abstract}

\section{Introduction}
In \cite{khovanov}, Khovanov introduced what he called the ``categorification of the Jones polynomial'', and which has since become known as Khovanov homology. Khovanov's idea was to construct a link invariant as the homology of a certain chain complex associated to the link, in such a way that the graded Euler characteristic of this homology theory is the link's Jones polynomial. In \cite{bnintro}, Bar-Natan calculated the Khovanov homology of many knots, and found that it is a strictly stronger invariant than the Jones polynomial---there are pairs of knots whose Jones polynomials are equal but whose Khovanov homologies are non-isomorphic. 

However, it is misleading to think of Khovanov homology as a simple extension of the Jones polynomial, as its theory turns out to be very rich. It has been developed and extended in many ways, including the anti-commutative ``odd'' version defined by Osv\'ath, Rasmussen, and Sz\'abo in \cite{odd}, the higher-dimensional analogue introduced by Sz\'abo in \cite{szabo}, and its extensions to tangle invariants defined by Khovanov in \cite{khovtangle} and by Bar-Natan in \cite{bntangle}. This latter paper, \cite{bntangle}, is also important because it is where Bar-Natan introduced a ``perturbation'' to Khovanov homology (similar to Lee's perturbation \cite{lee}) which has become a major area of study.

In this paper, we begin by introducing Khovanov homology and the Bar-Natan perturbation. Following Khovanov's \cite{reduced}, the two reduced Bar-Natan theories are introduced, and we finish by extending a result of Shumakovitch in \cite{shumakovitch} to show that over the binary field $\F_2$, the Bar-Natan theory splits as the direct sum of the two reduced theories. 

\paragraph{Acknowledgements:} I would like to thank Sucharit Sarkar for his extremely helpful comments. This study was conducted while the author was supported for summer research from NSF Grant DMS-1350037.

\section{Khovanov Homology}
Let $\mathcal L$ be a link, and $\F_2$ be the field with two elements\footnote{Everything below also works over any field, and indeed over $\mathbb{Z}$. However, since our main result deals with the $\F_2$ version of Khovanov homology, we won't bother with the other versions.}. In this section, we summarize the definition of the Khovanov homology of $\mathcal L$, as defined in \cite{khovanov}. For this, we begin by fixing an oriented knot diagram $L$ of $\mathcal L$: this is a collection of oriented arcs in the plane, with $n$ double points. We write $n=n_++n_-$, where $n_+$ (resp.\ $n_-$) is the number of positive (resp.\ negative) crossings in $L$. We order these double points arbitrarily, thus identifying them with the set $\{1,2,\ldots,n\}$. Each double point is of the form
$$\xymatrix{
\ar@{-}[rd]&\\
\ar@{-}[ru]|\hole&
}$$
We define two ``resolutions'' of such a crossing:
\begin{center}
\includegraphics{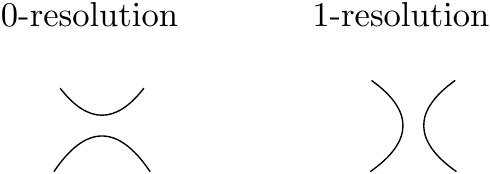} 
\end{center}
So for each vertex $\alpha$ of the hypercube $\{0,1\}^n$, we can resolve each crossing of $L$ in the obvious way: if the $j$th coordinate of $\alpha$ is $0$, then we use the $0$-resolution for crossing $j$, and we use the $1$-resolution otherwise. Note that any such ``full resolution'' can have no crossing points, so it must be a collection of disjoint circles embedded in the plane. We call this collection of circles $D_\alpha(L)$. We further associate to each vertex $\alpha$ a vector space $V_\alpha(L)$ of dimension $2^k$, where $k$ is the number of circles in $D_\alpha$. We identify the basis of $V_\alpha(L)$ with the collection of labellings of the circles in $D_\alpha(L)$ by two symbols, which we call $\1$ and $\x$. We can do this formally by letting $W$ be the $2$-dimensional vector space spanned by $\1$ and $\x$, and then declaring $V_\alpha(L):=W^{\otimes k}$. Moreover, we introduce a grading to elements of $V_\alpha(L)$ by declaring $gr(\1)=1,gr(\x)=-1$ and by extending grading additively to the tensor product $V_\alpha(L)$. Thus, a basis element of $V_\alpha(L)$ is a labelling of the $k$ circles in $D_\alpha(L)$, and its grading is the number of $\1$'s in this labelling minus the number of $\x$'s.

\subsection{Chain Groups}
The chain groups in the chain complex defining Khovanov homology are direct sums of these $V_\alpha$'s: 
\begin{Def}[Khovanov, \cite{khovanov}]
The Khovanov chain complex of $L$ has, as its $i$th chain group,
$$C_i:=\bigoplus_{\alpha \in \{0,1\}^n:w(\alpha)=i+n_-}V_\alpha(L)$$
where $w(\alpha)$ is the Hamming weight of $\alpha$, namely the number of $1$'s in $\alpha$. Each vector space $C_i$ is endowed with a \emph{quantum grading} defined by
$$q(v)=gr(v)+i+n_+-n_-$$
Since there are no hypercube vertices $\alpha$ with $w(\alpha)<0$ or $w(\alpha)>n$, we see that all chain groups $C_i$ with $i<-n_-$ or $i>n_+$ will be the $0$ group. 
\end{Def}
The Khovanov homology of $\mathcal L$ will be defined as the homology\footnote{Under the above definition, the differential will be a map $C_i \to C_{i+1}$, so Khovanov homology will, strictly speaking, be a cohomology theory.} of this chain complex, so we need to define a differential on this complex. 

\subsection{The Differential}
Defining a differential $d_i:C_i \to C_{i+1}$ will be done in several steps, beginning by defining a map for each edge of the hypercube. Note that an edge of the hypercube connects two vertices $\alpha$ and $\beta$ which necessarily differ in exactly one coordinate, say the $j$th. Without loss of generality, $\alpha$ has a $0$ in the $j$th coordinate, while $\beta$ will have a $1$ there. This means that the only difference between $D_\alpha(L)$ and $D_\beta(L)$ is in the resolution of the $j$th crossing, where $D_\alpha$ uses a $0$-resolution, while $D_\beta$ uses a $1$-resolution. In order to mark how changing a $0$-resolution to a $1$-resolution will affect the diagram, we place a little arc in each $0$-resolution, as follows:
\begin{center}
\includegraphics{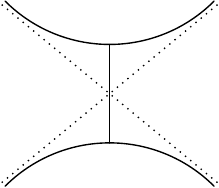}
\end{center}

As shown, we place this arc so that it overlaps with the original crossing point. Then, going from a $0$-resolution to a $1$-resolution simply involves performing surgery along this arc: we translate the arc a bit to the left and a bit to the right and treat these new segments as part of the $1$-resolution.
\begin{center}
\includegraphics{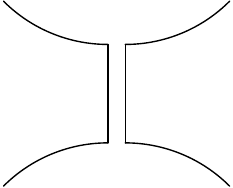}
\end{center}

We now consider the two possible cases:
\paragraph{Case 1:} The two arcs in the $0$-resolution of the $j$th crossing belong to two distinct circles of $D_\alpha(L)$. In this case, moving from a $0$-resolution to a $1$-resolution turns these two circles into one, while all other circles remain unchanged. This situation is called a \emph{merge}.
\paragraph{Case 2:} The two arcs in the $0$-resolution of the $j$th crossing belong to the same circle. In this case, going from a $0$-resolution to a $1$-resolution will turn this single circle into two distinct circles, while keeping all other circles unchanged. This situation is called a \emph{split}.

Using the above notation, we see that the following are a merge and a split, respectively:
\begin{center}
\includegraphics{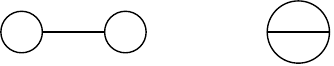}
\end{center}

Given such an edge between $\alpha$ and $\beta$, we wish to define a map $d_{\alpha,\beta}:V_\alpha(L) \to V_\beta(L)$. Of course, it suffices to define $d_{\alpha,\beta}$ on the basis of $V_\alpha(L)$. Recall that a basis vector for $V_\alpha(L)$ is simply a labelling of the circles in $D_\alpha(L)$, and similarly for $V_\beta(L)$. Given such a basis vector, we wish to define its image under $d_{\alpha,\beta}$. By the above, we know that most of the circles in $D_\alpha(L)$ are unchanged by the re-resolution of the $j$th crossing, so for these circles, we can simply define the labelling of the corresponding circle in $D_\beta(L)$ to be its original labelling in our given basis vector. 

Now, if the re-resolution is a split, we have taken into account the original labelling of all but one circle (namely the splitting circle), and we have defined the labellings of all but two target circles (namely the two daughter circles). So our task boils down to defining a map $\Delta:W \to W \otimes W$. We do this by declaring
$$\begin{array}{ccc}
\Delta(\1)=\1 \otimes \x+\x \otimes \1&&\Delta(\x)=\x \otimes \x
\end{array}$$
Similarly, in the case of a merge, our task is simply to define a map $m:W \otimes W \to W$, which we do by declaring
$$\begin{array}{ccc}
m(\1 \otimes \1)=\1 &&m(\x \otimes \x)=0  \\
m(\1\otimes \x)=\x && m(\x \otimes \1)=\x 
\end{array}$$
Then, in addition to the above identity actions on the circles not involved in the re-resolution, we have defined a map $d_{\alpha,\beta}:V_\alpha(L) \to V_\beta(L)$. For convenience, we also define $d_{\alpha,\beta}$ to be the $0$ map in case there is no edge between $\alpha$ and $\beta$. These maps satisfy a commutativity relation, namely 
\begin{lem}[Khovanov, \cite{khovanov}] \label{squarecomm}
For each $2$-dimensional face in the hypercube, the maps on the edges commute. More formally, a $2$-dimensional face has vertices $\{\alpha,\beta_1,\beta_2,\gamma\}$, where $\beta_1$ and $\beta_2$ are gotten from $\alpha$ by changing one $0$ coordinate to $1$, and $\gamma$ is similarly gotten from both $\beta_1$ and $\beta_2$. Then for all such faces,
$$d_{\beta_1,\gamma} \circ d_{\alpha,\beta_1}=d_{\beta_2,\gamma} \circ d_{\alpha,\beta_2}$$
\end{lem}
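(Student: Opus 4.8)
The plan is to prove the lemma by a direct, local case analysis, matching the two ways of going around each $2$-dimensional face to the algebraic identities satisfied by $m$ and $\Delta$. Fix such a face, with vertices $\alpha,\beta_1,\beta_2,\gamma$, lying in coordinates $j$ and $j'$, so that $d_{\alpha,\beta_1}$ and $d_{\beta_2,\gamma}$ re-resolve crossing $j$ while $d_{\alpha,\beta_2}$ and $d_{\beta_1,\gamma}$ re-resolve crossing $j'$. The surgery arc $a_j$ lies in a small neighbourhood of crossing $j$, and $a_{j'}$ in a small neighbourhood of crossing $j'$; since $j\neq j'$ these neighbourhoods are disjoint. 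Hence any circle of $D_\alpha(L)$ meeting neither arc survives unchanged through $D_{\beta_1}$, $D_{\beta_2}$ and $D_\gamma$, and both compositions act as the identity on the corresponding tensor factor of $V_\alpha(L)$; so it suffices to analyse the (at most four) circles touched by $a_j$ or $a_{j'}$.

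The first, trivial, case is that $a_j$ and $a_{j'}$ touch disjoint sets of circles: then neither re-resolution affects the circle structure near the other (nor whether it is a merge or a split), so the two edge maps act on disjoint tensor factors and visibly commute. When the two arcs share a circle I would enumerate the possibilities, up to the symmetry $j\leftrightarrow j'$: (i) both re-resolutions are merges sharing exactly one circle; (ii) both are merges of the same pair of circles; (iii) one is a merge and one a split, sharing a circle; (iv) both are splits of the same circle, with the four arc-endpoints appearing unlinked along it; (v) both are splits of the same circle with the endpoints linked. The one point requiring care is that a surgery along one arc can alter the type of the other re-resolution: in (ii) the merge along $a_j$ leaves $a_{j'}$ with both feet on one circle, so the second step is a split, and in (v) the split along $a_j$ separates the two feet of $a_{j'}$, so the second step is a merge. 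One must therefore read off the combinatorics after the first surgery, not merely from $D_\alpha$.

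With the cases so organised, the equality $d_{\beta_1,\gamma}\circ d_{\alpha,\beta_1}=d_{\beta_2,\gamma}\circ d_{\alpha,\beta_2}$, restricted to the relevant factors, reduces to: associativity of $m$ in case (i); the Frobenius relation $\Delta\circ m=(m\otimes\mathrm{id})\circ(\mathrm{id}\otimes\Delta)=(\mathrm{id}\otimes m)\circ(\Delta\otimes\mathrm{id})$ in case (iii); coassociativity of $\Delta$ in case (iv); and in cases (ii) and (v) it is immediate, since both paths compute $\Delta\circ m$, respectively $m\circ\Delta$, with the same inputs. Any leftover ambiguity about which physical circle of $D_\gamma$ corresponds to which tensor slot is harmless because $m$ is commutative and $\Delta$ is cocommutative. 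Each of the named identities is then verified by evaluating both sides on the basis $\{\1,\x\}$; for instance $(m\otimes\mathrm{id})(\mathrm{id}\otimes\Delta)(\1\otimes\1)=(m\otimes\mathrm{id})(\1\otimes\1\otimes\x+\1\otimes\x\otimes\1)=\1\otimes\x+\x\otimes\1=\Delta(\1)=\Delta(m(\1\otimes\1))$, and the remaining inputs and identities are equally short.

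I expect the case enumeration, and specifically keeping track of how each surgery changes the ambient circle diagram so that the right identity gets invoked, to be the only real work; once the correct identity is named, each is a one-line check. (Conceptually, one can instead observe that the two compositions realise two cobordisms from $D_\alpha(L)$ to $D_\gamma(L)$, each built from two saddles supported in the disjoint neighbourhoods of crossings $j$ and $j'$, hence isotopic rel boundary, and that $W$ with $m$, $\Delta$, and the evident unit and counit is a commutative Frobenius algebra --- a $2$-dimensional TQFT --- under which isotopic cobordisms induce equal maps; but as that machinery has not been set up here, I would write out the hands-on argument above.)
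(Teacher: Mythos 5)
Your proof is correct. Note that the paper itself gives no proof of this lemma --- it is stated as Khovanov's result with a citation and used as a black box --- so there is nothing internal to compare against; but your argument (locality of the two surgeries away from their disjoint neighbourhoods, then a case analysis on how the arcs share circles, reducing to associativity of $m$, coassociativity of $\Delta$, the Frobenius relation, and commutativity/cocommutativity of $W$) is exactly the standard verification underlying Khovanov's construction, and your attention to the fact that the first surgery can change the merge/split type of the second is the right place to be careful.
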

We then define the differential as follows.
\begin{Def}[Khovanov, \cite{khovanov}]\label{differential}
A map $d_i:C_i \to C_{i+1}$ is defined by
$$d_i:=\bigoplus_{\alpha,\beta~:~w(\alpha)=i+n_-~,~w(\beta)=i+1-n_-} d_{\alpha,\beta}$$
By Lemma \ref{squarecomm}, the maps $d_{\alpha,\beta}$ commute on each $2$-dimensional face, which is the same as anticommuting since we are working over $\F_2$. Therefore, $d_{i+1} \circ d_i=0$ for all $i$, which means that $d$ is a differential\footnote{For the general Khovanov homology (not over $\F_2$), it is necessary to add some minus signs to this definition. }.
\end{Def}
\begin{Def}[Khovanov, \cite{khovanov}]
The Khovanov chain complex $\C(L)$ is the chain complex $(C_*,d_*)$, with chain groups $C_i$ and differential $d_i$ for all $i$. By Lemma \ref{squarecomm} and the above remarks, $\C(L)$ is indeed a chain complex.

The Khovanov homology $Kh_i(L)$ is defined as the $i$th homology group of $\C(L)$. Since the maps $\Delta$ and $m$ both decrease the grading $gr$ by $1$, we can see that they will act homogeneously on the quantum grading, which means that $q$ will descend to a grading on the homology $Kh_i(L)$. Thus, $Kh_i(L)$ is a graded vector space.
\end{Def}

It is clear that the Khovanov chain complex itself is highly dependent on which knot diagram $L$ of the link $\mathcal L$ we pick; for instance, the number of chain groups in $\C(L)$ is precisely the number of crossings in $L$, and there are knot diagrams of the same link with arbitrarily many crossings. Nonetheless:
\begin{thm}[Khovanov, \cite{khovanov}]\label{invariance}
Khovanov homology is a knot invariant. More formally, if $L$ and $L'$ are two knot diagrams of the same link $\mathcal L$, then $Kh_i(L) \cong Kh_i(L')$ for all $i$, where $\cong$ denotes isomorphism of graded vector spaces. 
\end{thm}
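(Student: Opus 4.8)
The plan is to reduce the statement to the \emph{Reidemeister moves}. By a classical theorem of Reidemeister, any two diagrams $L$ and $L'$ of the same link $\mathcal{L}$ are related by a finite sequence of planar isotopies and Reidemeister moves $R1$, $R2$, $R3$. Planar isotopy changes nothing up to canonical identification (the collection of circles $D_\alpha(L)$, hence every $V_\alpha(L)$, every edge map $d_{\alpha,\beta}$, and the numbers $n_+,n_-$, is unaffected), so it suffices to exhibit, for each of the three moves, a grading-preserving isomorphism $Kh_i(L)\cong Kh_i(L')$ when $L$ and $L'$ differ by that move inside a small disk, with everything outside the disk held fixed.

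The algebraic engine for all three cases is a \emph{Gaussian elimination} observation: if a chain complex over $\F_2$ has one chain group that splits as $A\oplus A'$, the next as $B\oplus B'$, and the component $A\to B$ of the differential is an isomorphism, then the complex is chain homotopy equivalent to the one obtained by deleting $A$ and $B$ and correcting the surviving differential by the composite through $A\xrightarrow{\sim}B$. Since the $R1$, $R2$, $R3$ moves each attach a small sub-hypercube of new vertices to the cube, and the edge maps inside this sub-hypercube are built locally from $m$ and $\Delta$, the strategy is always the same: identify inside $\C(L)$ a redundant piece — a subcomplex or quotient complex coming from one of the local resolutions — on which an edge map acts invertibly, cancel it, and recognize what is left as $\C(L')$. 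A short explicit computation with $m$ and $\Delta$ supplies the needed isomorphism in each case, and Lemma~\ref{squarecomm} guarantees that all the relevant faces commute, so these cancellations are legitimate and the result is again a genuine chain complex.

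The remaining work is bookkeeping and one genuinely harder move. For $R1$ (adding a kink), one resolution of the new crossing produces a diagram with a small extra circle disjoint from the rest; simplifying via $m$ and $\Delta$ and then eliminating the part on which the edge map is invertible leaves a complex isomorphic to $\C(L')$. The kink changes $n_+$ or $n_-$ by one, and the definitions of the homological shift by $n_-$ and of the quantum grading $q(v)=gr(v)+i+n_+-n_-$ are designed precisely so that this cancellation preserves both gradings — this must be checked carefully. For $R2$, the attached piece is a square $\{\alpha,\beta_1,\beta_2,\gamma\}$; one vertex (after a possible change of basis, one rank-one summand) is joined to a neighbor by an invertible edge map, and a single Gaussian elimination collapses the square and reproduces $\C(L')$. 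The move changes both $n_+$ and $n_-$ by one, so the shifts again cancel.

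The main obstacle is $R3$. Each side of the move carries a sub-hypercube of dimension $3$, and there is no single obvious invertible edge map. The cleanest route is to localize: treat the tangle inside the disk as producing a complex whose entries are formal partial resolutions relative to the fixed boundary, with $m$'s and $\Delta$'s as maps, exactly as above. One then performs an $R2$-style cancellation on the sub-hypercube of \emph{two} of the three crossings, on each side of the move; both sides reduce to the same small complex — the categorified analogue of the fact that the Kauffman bracket is $R3$-invariant once $R2$-invariance is known — and matching the two reductions yields $Kh_i(L)\cong Kh_i(L')$. Since $R3$ leaves $n_+$ and $n_-$ unchanged, no grading correction is needed, and the three cases together prove the theorem. (Alternatively, one can invoke Bar-Natan's cobordism-category formalism to make the local reductions functorial, but the elementary Gaussian-elimination argument above suffices.)
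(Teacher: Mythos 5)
The paper does not actually prove this theorem: it is quoted from Khovanov's original article, so there is no internal argument to compare yours against. Your proposed route --- reduce to the three Reidemeister moves and handle each by Gaussian elimination on the local cube --- is not Khovanov's original proof (he constructs explicit chain maps and chain homotopies realizing a quasi-isomorphism for each move); it is the later Bar-Natan-style ``delooping plus cancellation'' argument, which is a perfectly legitimate and in many ways cleaner alternative, and over $\F_2$ you are spared all sign bookkeeping. For $R1$ and $R2$ your outline is essentially right, with one small correction: for $R2$ the vertex containing the extra circle must first be split using the isomorphism $W\cong\F_2\1\oplus\F_2\x$ (delooping), and then \emph{two} cancellations, not one, are needed to collapse the square down to the piece identified with $\C(L')$; also the grading checks you defer for $R1$ and $R2$ really do need to be written out, since the statement is an isomorphism of graded vector spaces.

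The genuine gap is $R3$. Your argument there is ``perform the $R2$-style reduction on the two-crossing subcube on each side; both sides reduce to the same small complex,'' but that sameness is precisely the content of the move and is not automatic: Gaussian elimination modifies the surviving differential by zig-zag correction terms (the composite through the inverted edge), and after reducing both sides you obtain two complexes whose underlying groups match but whose differentials contain different-looking correction terms; showing these agree (or constructing an explicit isomorphism between the two reduced complexes) is where all the real work in the $R3$ case lives --- it is exactly why Bar-Natan introduces the mapping-cone formalism and why Khovanov's original proof of $R3$ is by far the longest. As written, your proposal asserts the conclusion of that computation rather than performing it, so the $R3$ case is not yet proved. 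Supplying that verification (either by the explicit local computation with $m$ and $\Delta$ over $\F_2$, or by invoking the cone argument you mention parenthetically) would complete the proof.
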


\subsection{The Bar-Natan Perturbation}
Now, we define the Bar-Natan perturbation to Khovanov homology. We begin with a purely algebraic formality:
\begin{Def}
Let $H$ be a formal variable, and let $V$ be an $\F_2$-vector space. We extend $V$ to $V[H]$, which is a module over the polynomial ring $\F_2[H]$, as follows. The elements of $V[H]$ are formal linear combinations
\[\sum_{i \geq 0} H^i v_i\]
where $v_i \in V$, and only finitely many summands are nonzero. Addition is defined componentwise, and the action of $\F_2[H]$ is given, naturally, by
\[\lambda \left(\sum_{i \geq 0} H^i v_i\right)=\sum_{i \geq 0} H^i \lambda v_i~~~~~~~~~H\left(\sum_{i \geq 0} H^i v_i\right)=\sum_{i \geq 0} H^{i+1}v_i\]
for $\lambda \in \F_2$. 
\end{Def}
In \cite{bntangle}, Bar-Natan introduced the map $\Delta':W \to W\otimes W$ defined by
\[\Delta'(\1)=\1\otimes\1\]
and $m':W \otimes W \to W$ defined by
\[m'(\x\otimes\x)=\x\]
with all other basis vectors sent to $0$ in either case\footnote{Over fields of characteristic $\neq 2$, it is necessary to add a minus sign to the definition of $\Delta'$.}. From these, he formed edge maps $h_{\alpha,\beta}$, which are defined as $\Delta'$ if the edge $\alpha,\beta$ is a split, and $m'$ if this edge is a merge. Finally, in analogy with Definition \ref{differential}, he defined a homomorphism $h_i:C_i \to C_{i+1}$ by
\[h_i:=\bigoplus_{\alpha,\beta:w(\alpha)=i+n_-\,,\,w(\beta)=i+1-n_-} h_{\alpha,\beta}\]
\begin{Def}[Bar-Natan, \cite{bntangle}]
Let $H$ be a formal parameter of degree $-2$. The Bar-Natan perturbation to the Khovanov chain complex (often called simply the Bar-Natan complex) is the chain complex $\C_{BN}(L)=(C_*[H],d_*+Hh_*)$. In analogy with Lemma \ref{squarecomm}, it is straightforward to check that the edge maps commute on all $2$-dimensional faces of $\{0,1\}^n$, so $\C_{BN}(L)$ is indeed a chain complex. Its $i$th homology group, which is a graded $\F_2[H]$-module, is denoted by $BN_i(L)$.
\end{Def}
He also proved the following theorem:
\begin{thm}[Bar-Natan, \cite{bntangle}]
The Bar-Natan homology is a link invariant. More formally, if $L$ and $L'$ are two knot diagrams of the same link $\mathcal L$, then $BN_i(L) \cong BN_i(L')$ for all $i$, where $\cong$ denotes isomorphism of graded $\F_2[H]$-modules.
\end{thm}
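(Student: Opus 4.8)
The plan is to adapt the proof of Theorem~\ref{invariance} to the perturbed differential: show that the chain homotopy type of $\C_{BN}(L)$ is unchanged by planar isotopy and by each of the three Reidemeister moves. Since chain homotopy equivalent complexes have isomorphic homology, and the equivalences I will construct are $\F_2[H]$-linear and preserve both the homological grading $i$ and the quantum grading $q$, this yields the asserted isomorphism of graded $\F_2[H]$-modules. A planar isotopy of $L$ alters neither the chain groups $C_i[H]$ nor the edge maps $d_{\alpha,\beta}+Hh_{\alpha,\beta}$, as these depend only on the combinatorics of which arcs merge or split which circles; so by Reidemeister's theorem it is enough to exhibit, for each of R1, R2, R3, a bigraded $\F_2[H]$-linear chain homotopy equivalence $\C_{BN}(L)\simeq\C_{BN}(L')$ whenever $L'$ arises from $L$ by that move.

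The main tool is Gaussian elimination (the ``cancellation lemma'') for complexes over $\F_2[H]$: if a matrix entry of a differential is an isomorphism of $\F_2[H]$-modules, one may delete the two summands it joins, at the cost of a correction term on the remaining differential, and the result is explicitly homotopy equivalent to the original. A Reidemeister move introduces new crossings and hence a small local sub-hypercube of resolutions inside $\{0,1\}^n$; after removing the circles created near the move via the ``delooping'' isomorphism, a component of $d+Hh$ becomes invertible. This is where the algebra enters: over $\F_2$ anticommutativity is the same as commutativity, so the unperturbed signs are harmless, and the combined operations make $W[H]$ into the commutative Frobenius algebra $A_{BN}=\F_2[H][X]/(X^2-HX)$, with product $m+Hm'$, coproduct $\Delta+H\Delta'$, unit $\1$, and counit $\epsilon(\1)=0$, $\epsilon(\x)=1$. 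It is the Frobenius-algebra axioms --- associativity, coassociativity, the counit identity, and the ``neck-cutting'' Frobenius relation --- that force the relevant local blocks to be isomorphisms.

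Carrying this out, for each of the three moves one resolves the new crossing(s), deloops, and cancels the invertible block(s); what is left is the complex of $L'$ with the structure away from the move transported by identity maps, and this is isomorphic as a bigraded $\F_2[H]$-complex to $\C_{BN}(L)$. The most economical organization does these three computations once, in Bar-Natan's category of formal complexes of planar smoothings and cobordisms, where the local homotopy equivalences are established purely by delooping and Gaussian elimination; the rule sending a circle to $W[H]$ and a cobordism to the appropriate composite of $m+Hm'$, $\Delta+H\Delta'$, the unit, and $\epsilon$ is a functor to $\F_2[H]$-modules, which therefore carries those equivalences to the $\F_2[H]$-linear chain homotopy equivalences required.

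I expect the real work here to be bookkeeping, not new ideas. One must check that $A_{BN}$ genuinely is a Frobenius algebra after the perturbation --- in particular that $\Delta+H\Delta'$ is the comultiplication dual to $m+Hm'$ under the pairing $\langle a,b\rangle=\epsilon(ab)$ --- and that the degree shifts attached to each Reidemeister move are consistent once $H$ is placed in degree $-2$, so that the equivalences are quantum-grading preserving rather than merely filtered. Granting the Frobenius relations and the grading computation, the three Reidemeister arguments are formally identical to those in the unperturbed case of \cite{khovanov}, and link invariance of $BN_i$ follows.
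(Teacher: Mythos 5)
The paper does not prove this statement at all --- it is quoted as Bar-Natan's theorem and justified only by the citation to \cite{bntangle} --- so there is no in-paper argument to compare against; your sketch supplies what the paper delegates to the reference, and it is essentially Bar-Natan's own strategy. Your outline is sound: the perturbed edge maps come from the Frobenius algebra $\F_2[H][X]/(X^2-HX)$ (your check that $\x\cdot\x=H\x$, $\epsilon(\1)=0$, $\epsilon(\x)=1$, and that $\Delta+H\Delta'$ is dual to $m+Hm'$ under $\langle a,b\rangle=\epsilon(ab)$ is exactly the point, and the pairing is unimodular over $\F_2[H]$, which is what makes delooping available), and with $H$ in degree $-2$ the perturbed differential is honestly homogeneous over $\F_2$, so the Reidemeister equivalences can be taken bigraded rather than merely filtered. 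Two small remarks on attribution and organization: the delooping-plus-Gaussian-elimination packaging is really from Bar-Natan's later ``fast computations'' framework, whereas \cite{bntangle} proves invariance once and for all in the cobordism category modulo the $S$, $T$, and $4Tu$ relations and then applies any functor (such as the TQFT of the algebra above, which one must check kills those relations) to get invariance of the perturbed homology; either route works, and your alternative of doing the delooping and cancellation directly in complexes of $\F_2[H]$-modules avoids having to verify the cobordism relations at all. Granting the routine verification of the three local moves, your argument establishes the theorem.
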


\section{The Bar-Natan Theory Splits}\label{k0}
We pick a basepoint on $L$ and maintain its position in each complete resolution. Let $\C_x$ and $\C_1$ denote the subgroups of $\C_{BN}$ in which the circle with the basepoint is labelled $\x$ and $\1$, respectively. Since both $d$ and $h$ map $\C_x$ into itself, we see that $\C_x$ is actually a subcomplex of $\C_{BN}$, while the quotient $\C_{BN}/\C_x$ is naturally isomorphic (as a group) to $\C_1$. Therefore, we can consider $\C_1$ to be a complex as well, and its differential is given by composing $d+Hh$ with the projection $\C_{BN} \twoheadrightarrow \C_{BN}/\C_x \cong \C_1$. 

Thus, we have a short exact sequence of chain complexes:
\[\xymatrix{0\ar[r]&\C_x \ar[r]&\C_{BN} \ar[r]&\C_1 \ar[r]&0}\]
This induces a long exact sequence on homology:
\[\xymatrix{\dotsb \ar[r]^-\partial&H(\C_x) \ar[r]& H(\C_{BN}) \ar[r]&H(\C_1) \ar[r]^-\partial &H(\C_x) \ar[r]&\dotsb}\]
The homologies $H(\C_x)$ and $H(\C_1)$ are called the reduced Bar-Natan theories. The connecting map $\partial$ is surprisingly easy to understand in this context, since it happens to be induced from a chain map $f:\C_1 \to \C_x$. $f$ acts by applying the differential and then discarding all terms in which the basepoint circle is not labelled $x$; i.e.\ $f=\pi_x \circ (d+Hh)$, where $\pi_x$ is a projection onto $\C_x$. Since the Bar-Natan perturbation $h$ never turns a circle labelled $\1$ into a circle labelled $\x$, it is clear that this connecting map is actually the same as the connecting map gotten in the standard Khovanov case. In particular, $f$ preserves both homological and quantum gradings. 

In \cite{shumakovitch}, Shumakovitch proved that Khovanov homology spits as the direct sum of these two reduced theories, and we wish to extend this result to the case of the Bar-Natan perturbation. More precisely, we prove the following theorem:
\begin{thm}
$H(\C_{BN})=H(\C_x) \oplus H(\C_1)$ when considered as modules over $\F_2[H]$.
\end{thm}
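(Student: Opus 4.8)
The plan is to realise the splitting already on the chain level, up to an $\F_2[H]$-linear automorphism. Write $\C_{BN}=\C_x\oplus\C_1$ as graded $\F_2[H]$-modules. Since $\C_x$ is a subcomplex and $\C_1$ the quotient, the differential $D:=d+Hh$ is block upper triangular, $D=\left(\begin{smallmatrix}D_x&f\\0&D_1\end{smallmatrix}\right)$, where $D_x,D_1$ are the differentials of $\C_x,\C_1$ and $f\colon\C_1\to\C_x$ is the chain map (of homological degree $+1$, quantum degree $0$) inducing $\partial$; as observed in the text, $f=\pi_x\circ d$, the $Hh$-part contributing nothing. The key point is that if we can produce an $\F_2[H]$-linear map $G\colon\C_1\to\C_x$ of homological and quantum degree $0$ with
\[D_xG+GD_1=f,\]
then conjugating $D$ by the $\F_2[H]$-linear automorphism $\left(\begin{smallmatrix}\mathrm{id}&G\\0&\mathrm{id}\end{smallmatrix}\right)$ of $\C_{BN}$ makes it block diagonal. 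That automorphism is the identity on $\C_x$ and on the quotient $\C_1$, so it exhibits $H(\C_{BN})\cong H(\C_x)\oplus H(\C_1)$ as graded $\F_2[H]$-modules, compatibly with the short exact sequence. Hence the whole problem reduces to showing that the connecting chain map $f$ is null-homotopic over $\F_2[H]$.

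When $H=0$ this is precisely Shumakovitch's theorem, and I would build the general $G$ by deforming his. Recall from \cite{shumakovitch} that over $\F_2$ there is an operator $\nu$ on the ordinary Khovanov complex $\C$, constructed from the basepoint, which commutes with $d$, squares to $0$, and satisfies $X_p\nu+\nu X_p=\mathrm{id}$, where $X_p$ is the chain endomorphism ``multiply the basepoint circle's label by $\x$''. When $H=0$ relabelling the basepoint $\1\mapsto\x$ is a chain isomorphism $\iota\colon\C_1\xrightarrow{\sim}\C_x$, and writing $\nu$ in block form relative to $\C=\C_x\oplus\C_1$ and unwinding the three relations forces $\nu=\left(\begin{smallmatrix}A&A^2\iota\\\iota^{-1}&\iota^{-1}A\iota\end{smallmatrix}\right)$ with $[d_x,A]=f\iota^{-1}$, equivalently $d_x(A\iota)+(A\iota)d_1=f$; so $G_0:=A\iota$ is the required null-homotopy for $H=0$. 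For the Bar-Natan complex I would look for $G$ as a perturbation $G=\sum_{k\ge0}H^kG_k$ of $G_0$, with $G_k\colon\C_1\to\C_x$ of homological degree $0$ and quantum degree $2k$. What makes this workable is that for a fixed diagram $L$ the complex $\C$ is finite dimensional over $\F_2$, so $G_k=0$ once $k$ is large: the perturbation is automatically a finite sum and the process cannot run away.

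Writing $D_x=d_x+Hh_x$, $D_1=d_1+Hh_1$ and comparing powers of $H$ in $D_xG+GD_1=f$ gives the base case $d_xG_0+G_0d_1=f$ (Shumakovitch's relation) and, for each $m\ge1$, the recursion
\[d_xG_m+G_md_1=h_xG_{m-1}+G_{m-1}h_1=:R_m,\]
so that each $G_m$ must be an ordinary-Khovanov null-homotopy of the map $R_m$, which depends only on the previous term. Using $h^2=0$, $hd+dh=0$, and the inductive hypothesis one checks that $R_m$ is a cocycle in the complex $\bigl(\mathrm{Hom}_{\F_2}(\C_1,\C_x),\ \phi\mapsto d_x\phi+\phi d_1\bigr)$; and, since we are over a field, $R_m$ is a coboundary exactly when it induces the zero map on Khovanov homology. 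So the construction of $G$ goes through provided each $R_m$ acts as zero on homology.

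The main obstacle is precisely this vanishing. The first instance, $R_1=h_x(A\iota)+(A\iota)h_1$, should — using Shumakovitch's structural relations for $\nu$ — reduce to the statement that Bar-Natan's operator $h$ commutes with $\nu$ on $Kh(\mathcal L;\F_2)$, and the higher instances to the analogous statement at each order of the deformation. I expect verifying these to be the technical heart of the argument, needing Shumakovitch's explicit description of $\nu$ and a direct analysis of how $\nu$ interacts with the elementary maps $m'$ and $\Delta'$ near the basepoint; everything preceding it is formal. (A variant trades this for a spectral sequence: filter $\C_{BN}=\C[H]$ by powers of $H$, identify $E_1$ with $Kh(\mathcal L)[H]$ with $d_1$ induced by $h$, and split every page by noting that Shumakovitch's decomposition of $Kh(\mathcal L)$ consists of $h_*$-submodules — but this needs the same commutation fact, and in addition an extension argument over $\F_2[H]$, so I would pursue the chain-level route.)
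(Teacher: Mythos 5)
Your formal reduction is exactly the paper's: since $\C_{BN}$ is the cone of the connecting chain map $f\colon\C_1\to\C_x$ (block upper triangular differential), the theorem follows once $f$ is null-homotopic over $\F_2[H]$, and a homotopy $G=\sum_k H^kG_k$ must satisfy the Shumakovitch relation at order $0$ and $d_xG_m+G_md_1=h_xG_{m-1}+G_{m-1}h_1$ at each higher order. But at that point your argument stops being a proof: you yourself flag that the vanishing of each obstruction $R_m$ on homology is ``the technical heart'' and leave it unverified, resting it on an unproved commutation statement between Bar-Natan's $h$ and Shumakovitch's $\nu$ on $Kh(\mathcal L;\F_2)$. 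That is the entire content of the theorem, so the gap is genuine. Moreover, the inductive scheme as stated is delicate even granting that statement: $R_m$ depends on the \emph{choice} of $G_{m-1}$ (which is only determined up to adding a chain map), so knowing that the first obstruction vanishes on homology does not by itself guarantee that one can choose the $G_k$ coherently so that every subsequent $R_m$ also vanishes on homology; an obstruction-theoretic argument would have to control these choices, and you give no mechanism for doing so.

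The paper sidesteps all of this by writing down an explicit closed-form candidate for every order at once: $K_i$ sends a configuration with basepoint circle labelled $\1$ to the sum over all $(i+1)$-tuples of $\x$-labelled circles of the configuration in which those circles are relabelled $\1$ and the basepoint circle is relabelled $\x$ (and is $0$ otherwise). The identities $f=[K_0,d]$ and $[K_i,h]+[K_{i+1},d]=0$ are then verified directly on each local $1$-dimensional configuration (merge or split, with or without the basepoint circle), which is elementary bookkeeping over $\F_2$ and requires no appeal to the structure of $Kh$ as an $h$-module or to $\nu$. To complete your proposal you would need either to supply such an explicit formula for the $G_m$ (at which point you have essentially reproduced the paper's proof), or to actually prove the homology-level vanishing statements together with an argument that the choices can be made compatibly at every order; as written, neither is done.
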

The rest of this section is devoted to proving this theorem. We will do this by proving that the map $f$ is nullhomotopic; this suffices since $\C_{BN}$ is the mapping cone of $f:\C_x \to \C_1$, so if $f$ is nullhomotpic we can conclude that $\C_{BN}$ is chain homotopy equivalent to $\C_x \oplus \C_1$ over $\F_2[H]$. Thus, we will get that $H(\C_{BN})=H(\C_x) \oplus H(\C_1)$ over $\F_2[H]$.
A nullhomotopy for $f$ is simply a map $K$ so that $f=[K,d+Hh]$, where $[\varphi,\psi]$ denotes the commutator $\varphi \circ \psi+\psi \circ \varphi$.\footnote{Since we are working over $\F_2$, the commutator and anticommutator are the same.} $K$ need not be homogeneous, but we can write $K=K_0+HK_1+H^2K_2+\dotsb$, where each $K_i$ is a map that fixes homological grading but raises quantum grading by $2i$. Thus, our goal is now to construct this family of functions.

\subsubsection*{The Khovanov Case: Constructing $K_0$}
The following construction is due to Shumakovitch in \cite{shumakovitch}. 

For a complete resolution $D$ in which the basepoint circle is labelled $\1$, we define
\begin{center}
\includegraphics{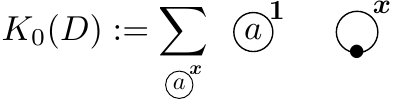}
\end{center}
The sum runs over all circles $a$ labelled $\x$ in $D$. For each such circle, it adds a copy of $D$ in which $a$ has been relabelled to $\1$ and the basepoint circle has been relabelled to $\x$. If this sum is empty or if the basepoint circle is labelled $\x$, then $K_0$ is identically $0$.\footnote{This last condition is another way of saying that $K_0$ is a map $\C_1 \to \C_x$. For if the basepoint circle is labelled $\x$, then the configuration does not lie in the domain of $K_0$, so we declare that $K_0$ evaluates to $0$.} This map clearly preserves homological and quantum grading. We can also check directly that $f=[K_0,d]$, which is the statement that the ordinary Khovanov theory splits over $\F_2$. To see this, we simply check that this equation holds for each 1-dimensional configuration.

We begin with those 1-dimensional configurations involving the basepoint circle. Starting with split maps, we first recall that
\begin{center}
\includegraphics{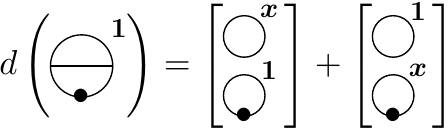}
\end{center}
from which we immediately see that
\begin{center}
\includegraphics{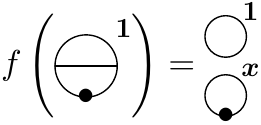}
\end{center}
We can also immediately calculate from this and from the definition of $K_0$ that
\begin{center}
\includegraphics{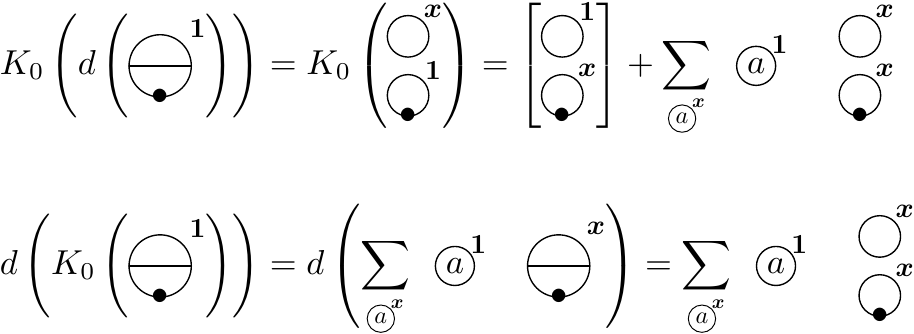}
\end{center}
We see that the $\sumallx$ terms will precisely cancel each other out, giving us
\begin{center}
\includegraphics{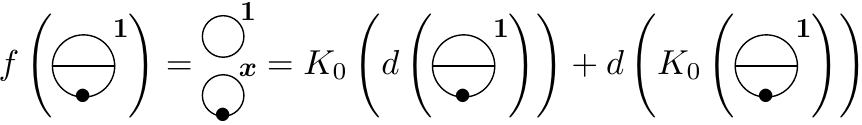}
\end{center}

Next, we consider the other split map, 
\begin{center}
\includegraphics{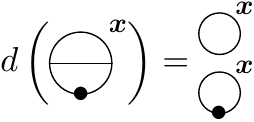}
\end{center}
Since the basepoint circle is labelled $\x$, both $f$ and $K_0$ evaluate to $0$ on this configuration. Moreover, since $d\left(\xy
*++{} *\frm{o} ="bp", "bp"!D *{\bullet},
"bp"!L *{}; "bp"!R *{} **\dir{-} ,
"bp"!RU(1.2)*[*.8]{\x}
\endxy\right)$ has the basepoint labelled $\x$, we see that $K_0\left(d\left(\xy
*++{} *\frm{o} ="bp", "bp"!D *{\bullet},
"bp"!L *{}; "bp"!R *{} **\dir{-} ,
"bp"!RU(1.2)*[*.8]{\x}
\endxy\right)\right )=0$ as well. Thus, $f=[K_0,d]$ on this configuration too.

Now we turn to merge maps, which are as straightforward. We can immediately dispense with the $\merge \x\x$ merge, since $f,d,$ and $K_0$ all evaluate to $0$ on it. Next, we recall that
\begin{center}
\includegraphics{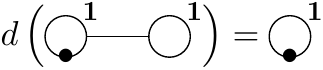}
\end{center}
and therefore
\begin{center}
\includegraphics{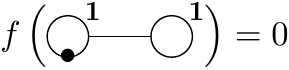}
\end{center}
We then calculate that
\begin{center}
\includegraphics{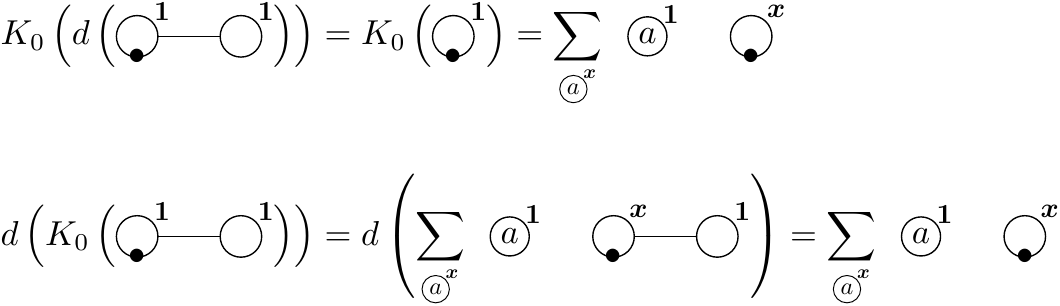}
\end{center}
which shows that $f=[K_0,d]$ for this configuration.

Next, we check the $\merge \1\x$ merge. We have
\begin{center}
\includegraphics{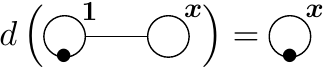}
\end{center}
and therefore
\begin{center}
\includegraphics{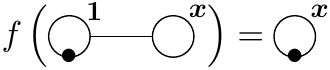}
\end{center}
as well. We also see that
\begin{center}
\includegraphics{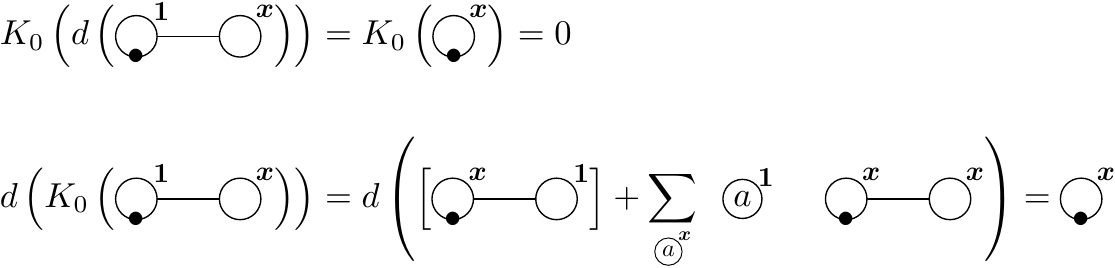}
\end{center}
Here, we can again see that $f=[K_0,d]$ for this configuration. 

Finally, we turn to the $\merge \x\1$ merge. We again have
\begin{center}
\includegraphics{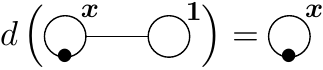}
\end{center}
while $f\left(\merge \x\1\right)=0$. In addition,
\begin{center}
\includegraphics{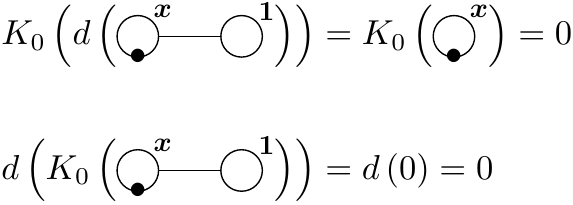}
\end{center}

Now, we must check that $f=[K_0,d]$ also for 1-dimensional configurations that don't involve the basepoint circle. Note that on all of these configurations, $f$ will be $0$, since surgery along any edge not connected to the basepoint circle cannot turn the basepoint circle from a $\1$ to an $\x$. Therefore, we wish to prove that $[K_0,d]=0$. Starting from the splits, we have
\begin{center}
\includegraphics{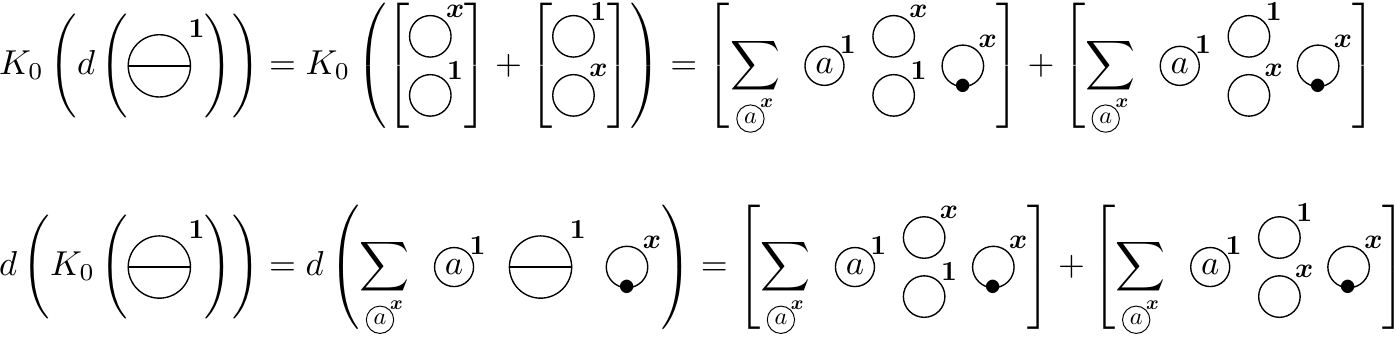}
\end{center}
Note that in calculating $K_0 \circ d$, we also get two terms in which both daughter circles are labelled $\1$, but these two terms cancel each other out since we are working over $\mathbb F_2$. 

Next, we consider the $\x$ split:
\begin{center}
\includegraphics{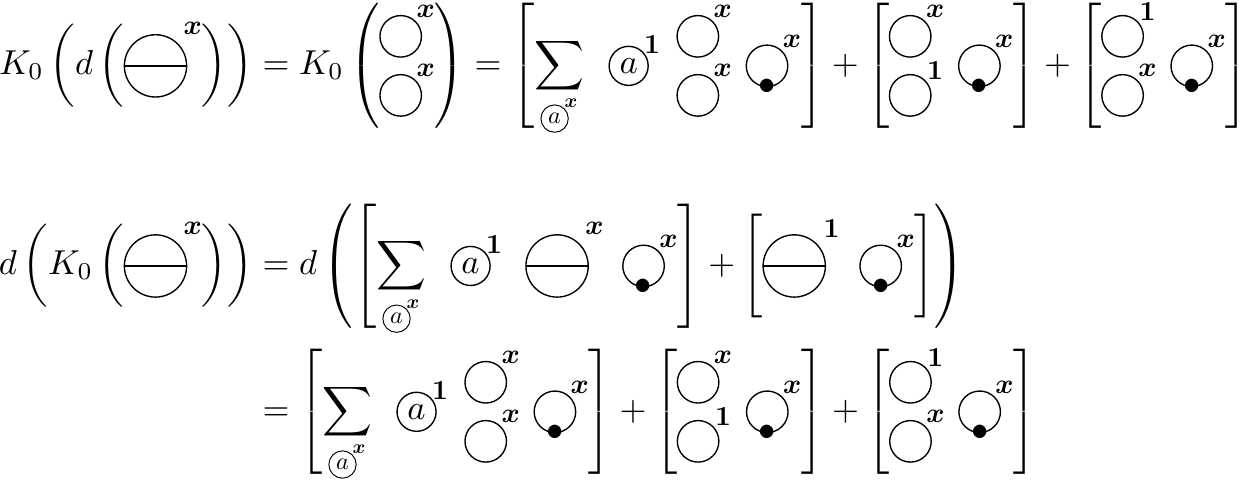}
\end{center}
and we see that these are indeed equal.

Next, we turn to the merges. We have
\begin{center}
\includegraphics{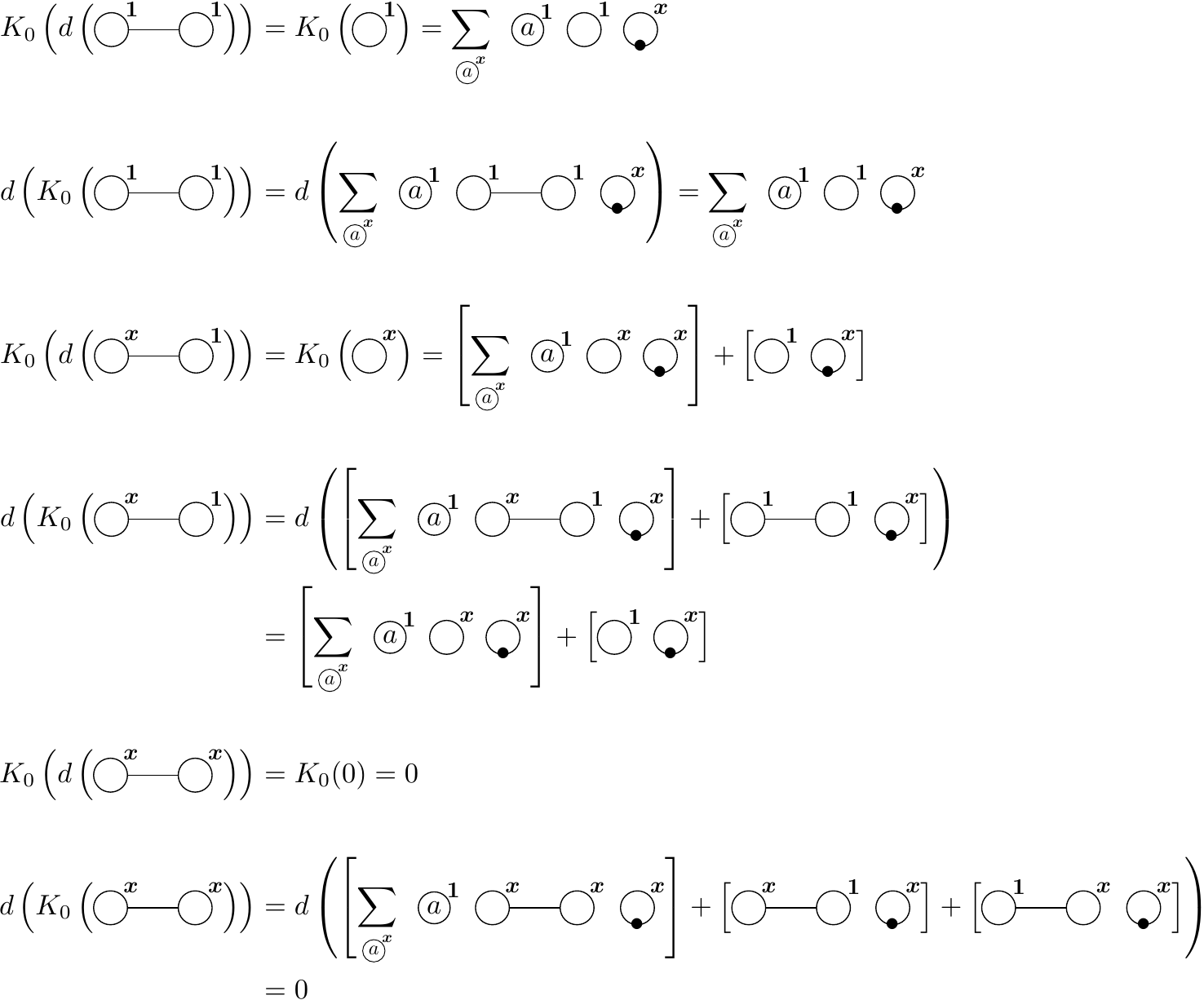}
\end{center}
Thus, since each pair sums to 0, we see that $f=[K_0,d]$.

\subsubsection*{The Bar-Natan Case: The Higher $K_i$'s}
In order to see that the Bar-Natan theory splits as well, we will construct the homotopies $K_i$ for all $i>0$. Recall that $K_i$ must preserve homological grading but raise quantum grading by $2i$, so there is a natural guess for a definition of $K_i$:
\begin{center}
\includegraphics{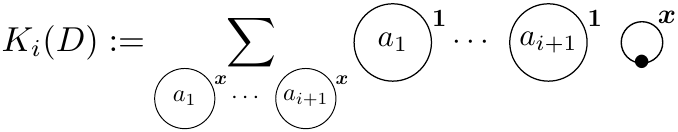}
\end{center}
This sum runs over all $(i+1)$-tuples of circles labelled $\x$. As usual, if the basepoint circle is labelled $\x$ in $D$ or if this sum is empty, then $K_i(D)=0$. Note that when $i=0$, this actually agrees with our above definition of $K_0$. Since we intend to define $K=K_0+HK_1+H^2 K_2+\dotsb$ and we intend to have $f=[d+Hh,K]$, we can see that we need
\[[K_i,h]+[K_{i+1},d]=0\]
to be satisfied for all $i>0$. We will begin by proving that this holds for all $1$-dimensional configurations involving the basepoint.

We can immediately see that $[K_i,h]$ has only two nonzero terms involving the basepoint, namely:
\begin{center}
\includegraphics{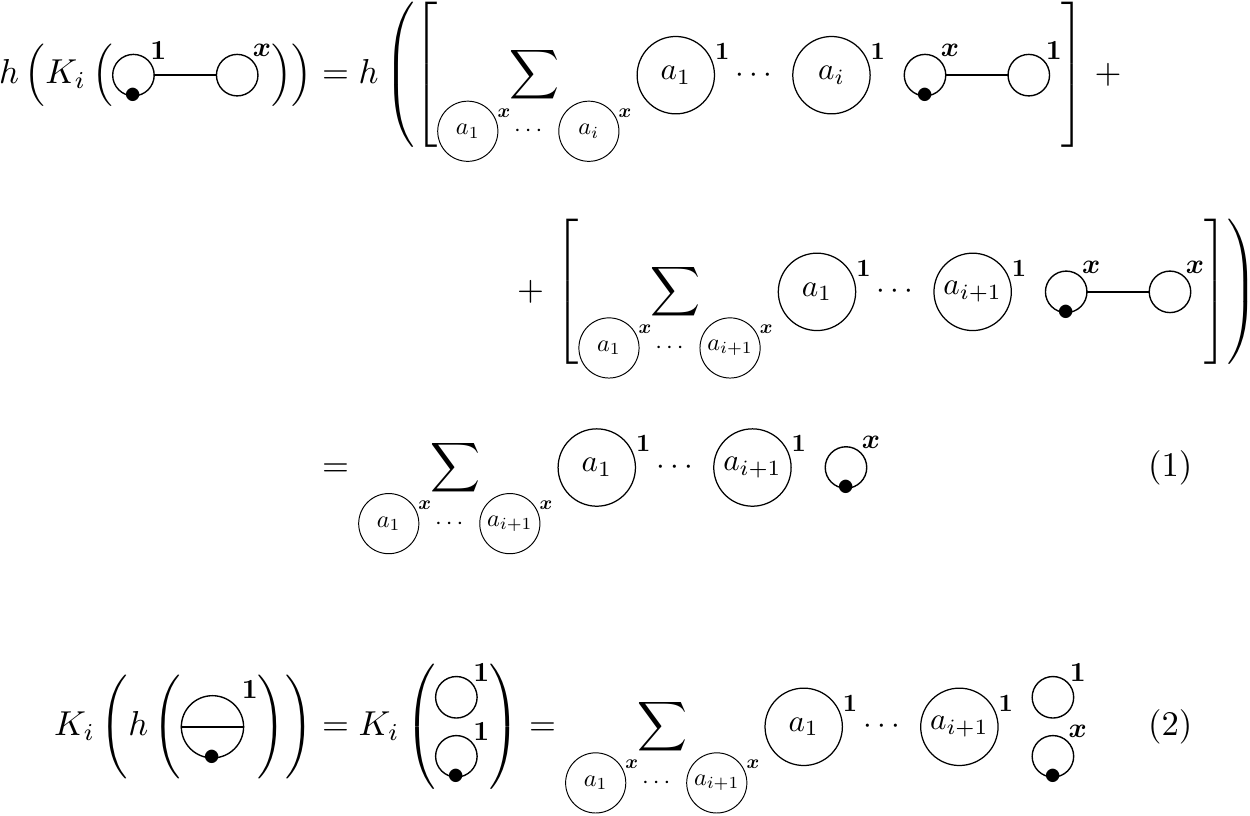}
\end{center}
We need to check that $[K_{i+1},d]$ has the same nonzero terms containing the basepoint, and we'll do this by first checking that $[K_{i+1},d]$ agrees with $[K_i,h]$ on these configurations, and then that it has no other nonzero basepoint terms. First, we note that $K_{i+1}\left(d\left(\merge \1\x\right)\right)=0$, since $d\left(\merge \1\x\right)$ has the basepoint circle labelled $\x$. On the other hand,
\begin{center}
\includegraphics{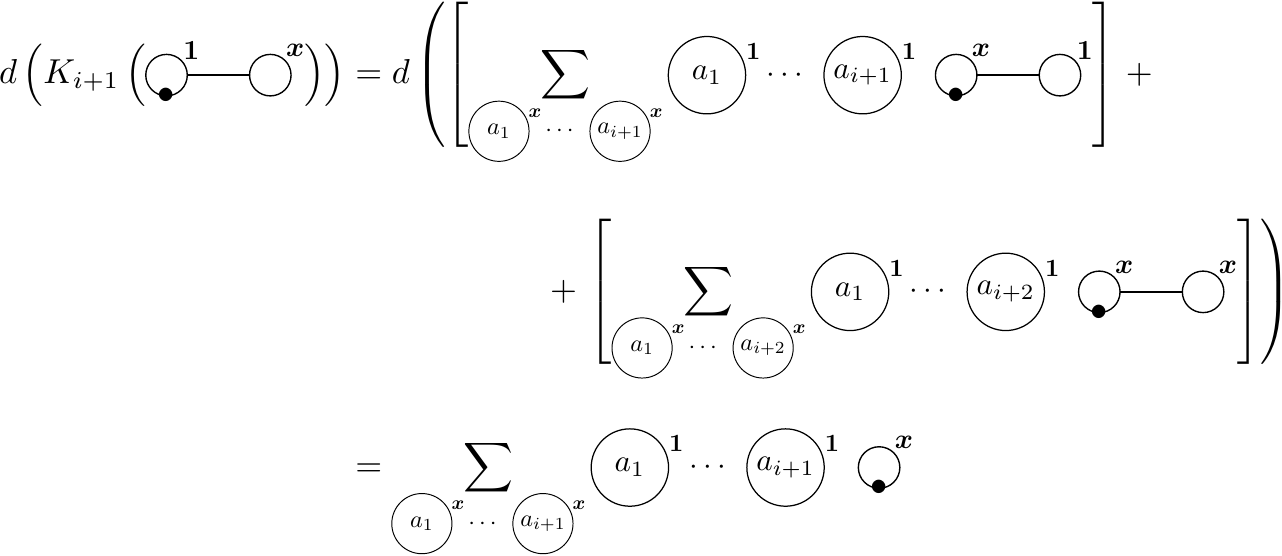}
\end{center}
which agrees with (1).

For the split, we have
\begin{center}
\includegraphics{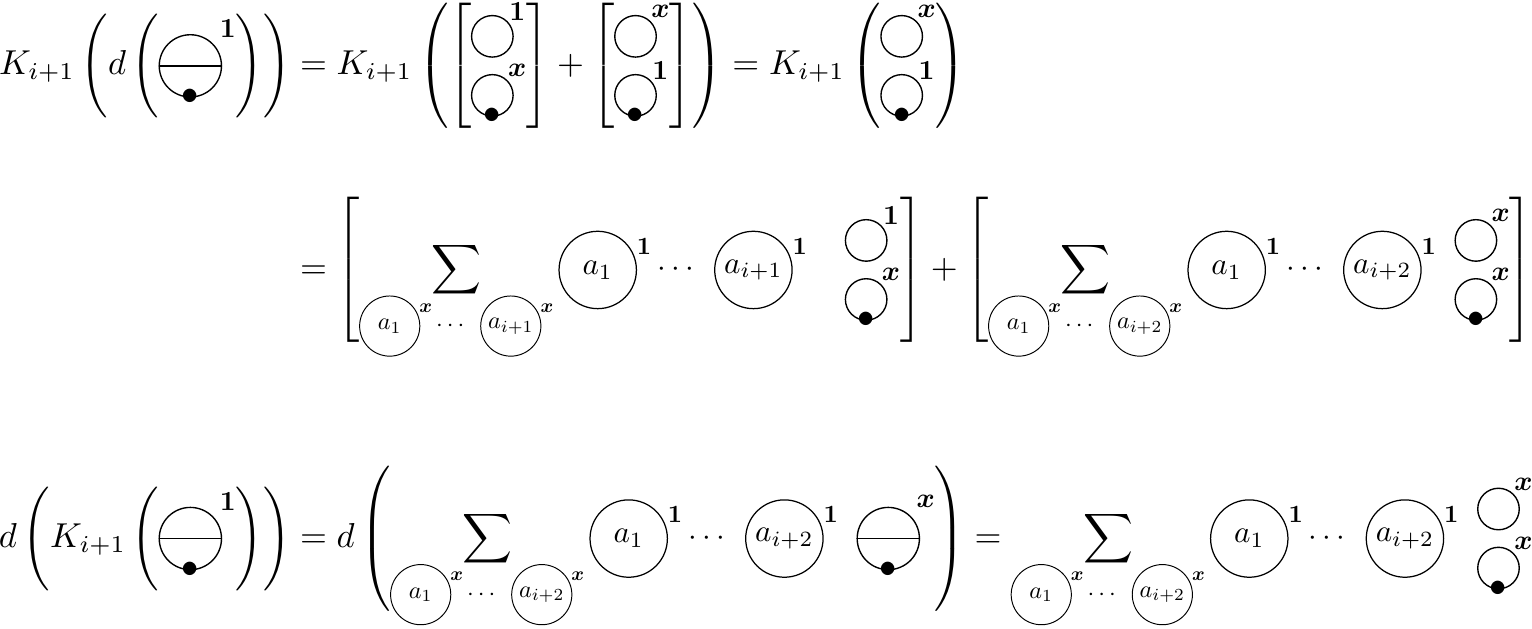}
\end{center}
And indeed, their sum precisely equals (2). 

Now, we check the other configurations involving the basepoint. We can immediately ignore the $\x,\x$ merge, since $K_{i+1}$ and $d$ both vanish on it. Similarly, we can dispense with the $\x$ split and the $\x,\1$ merge, since $K_{i+1}$ will vanish on both the input and the output. All that remains, therefore, is the $\1,\1$ merge. For this, we have
\begin{center}
\includegraphics{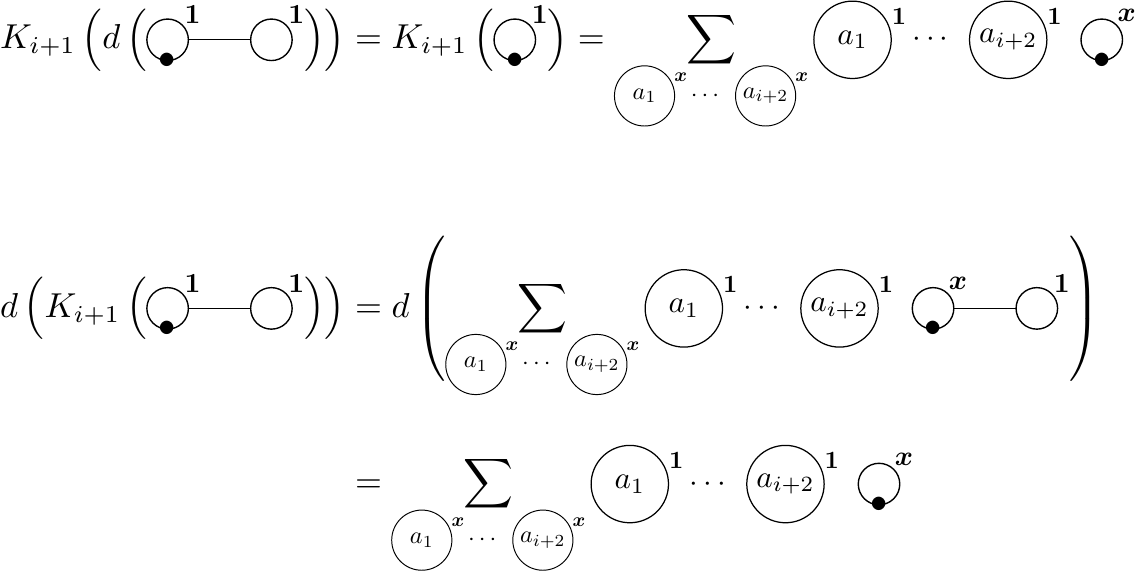}
\end{center}
So we indeed see that $[K_{i+1},d]$ has no other nonzero terms involving the basepoint.

Next, we need to check that $[K_i,h]=[K_{i+1},d]$ on all $1$-dimensional configurations not involving the basepoint. We begin by calculating $[K_i,h]$ on all such configurations, starting with the splits:
\begin{center}
\includegraphics{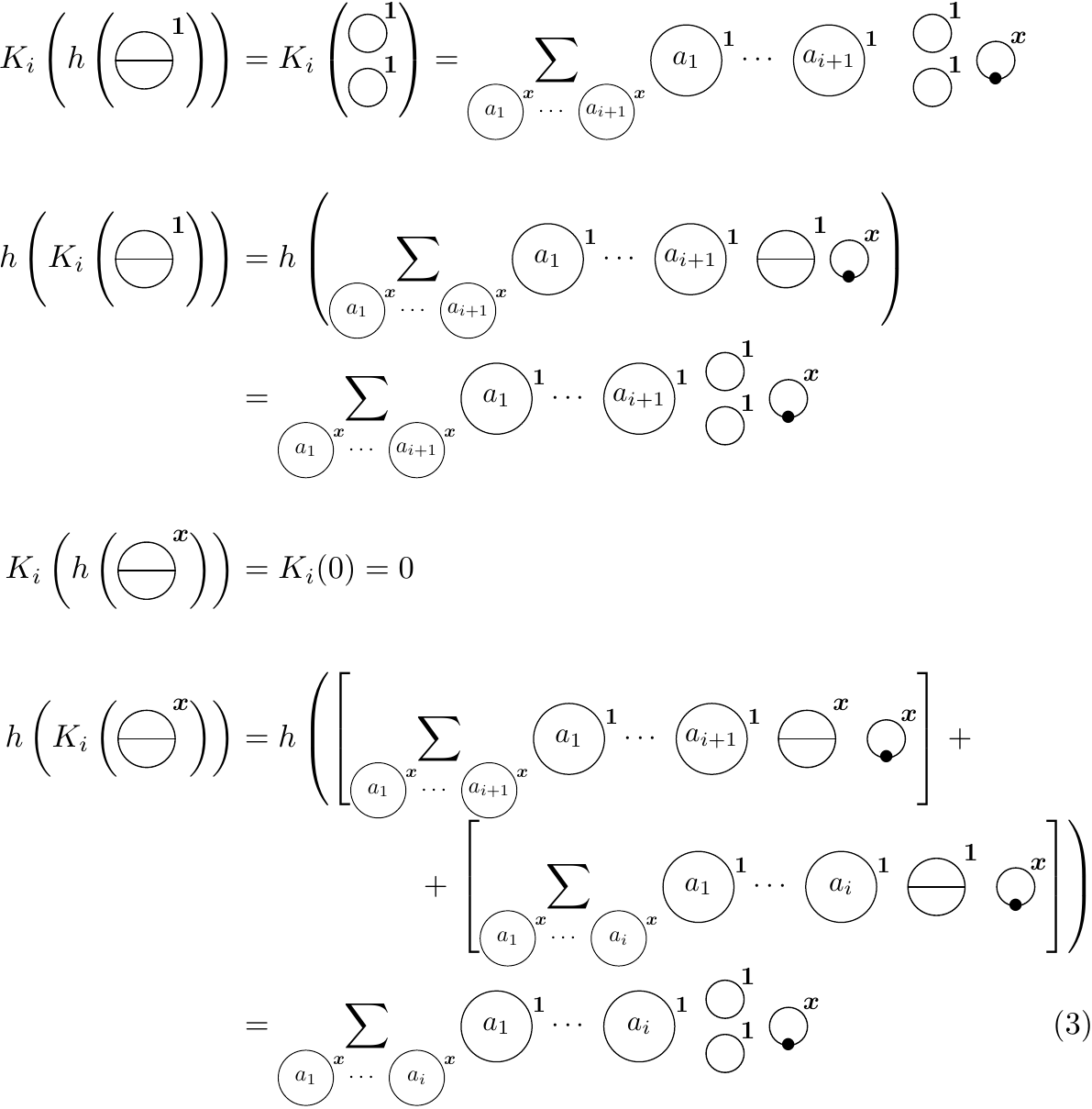}
\end{center}
Thus, the only nonzero term we need to worry about here is (3), which will be precisely cancelled soon.

Next, consider $[K_i,h]$ on the merges.
\begin{center}
\includegraphics{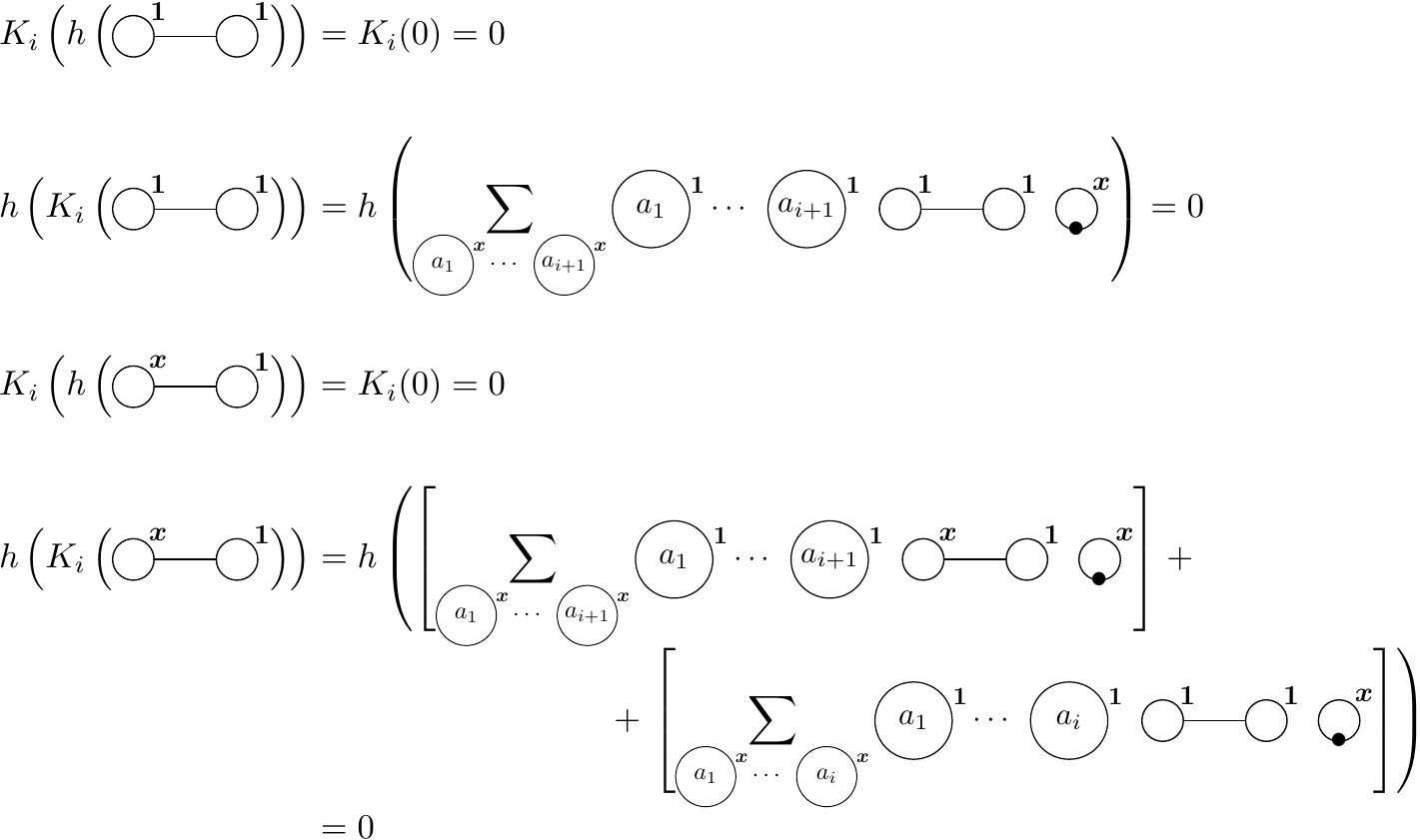}
\end{center}
\begin{center}
\includegraphics{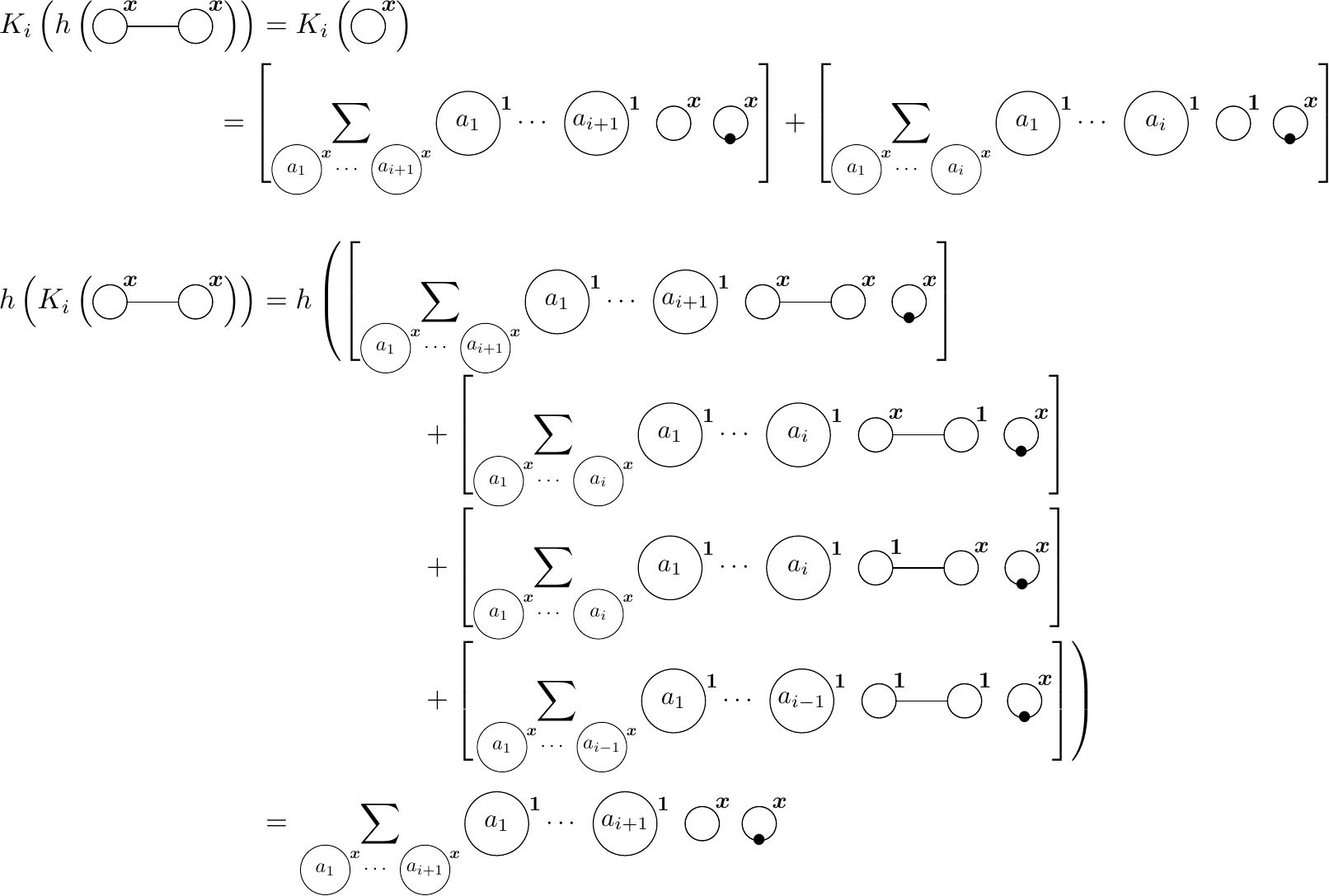}
\end{center}
Thus, the only nonzero term we need to worry about here is
\begin{center}
\includegraphics{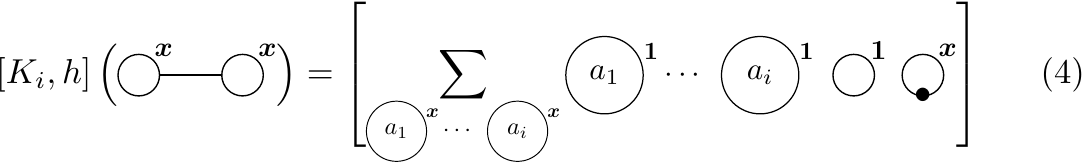}
\end{center}

Next, we do the same for $[K_{i+1},d]$, again beginning with the splits:
\begin{center}
\includegraphics{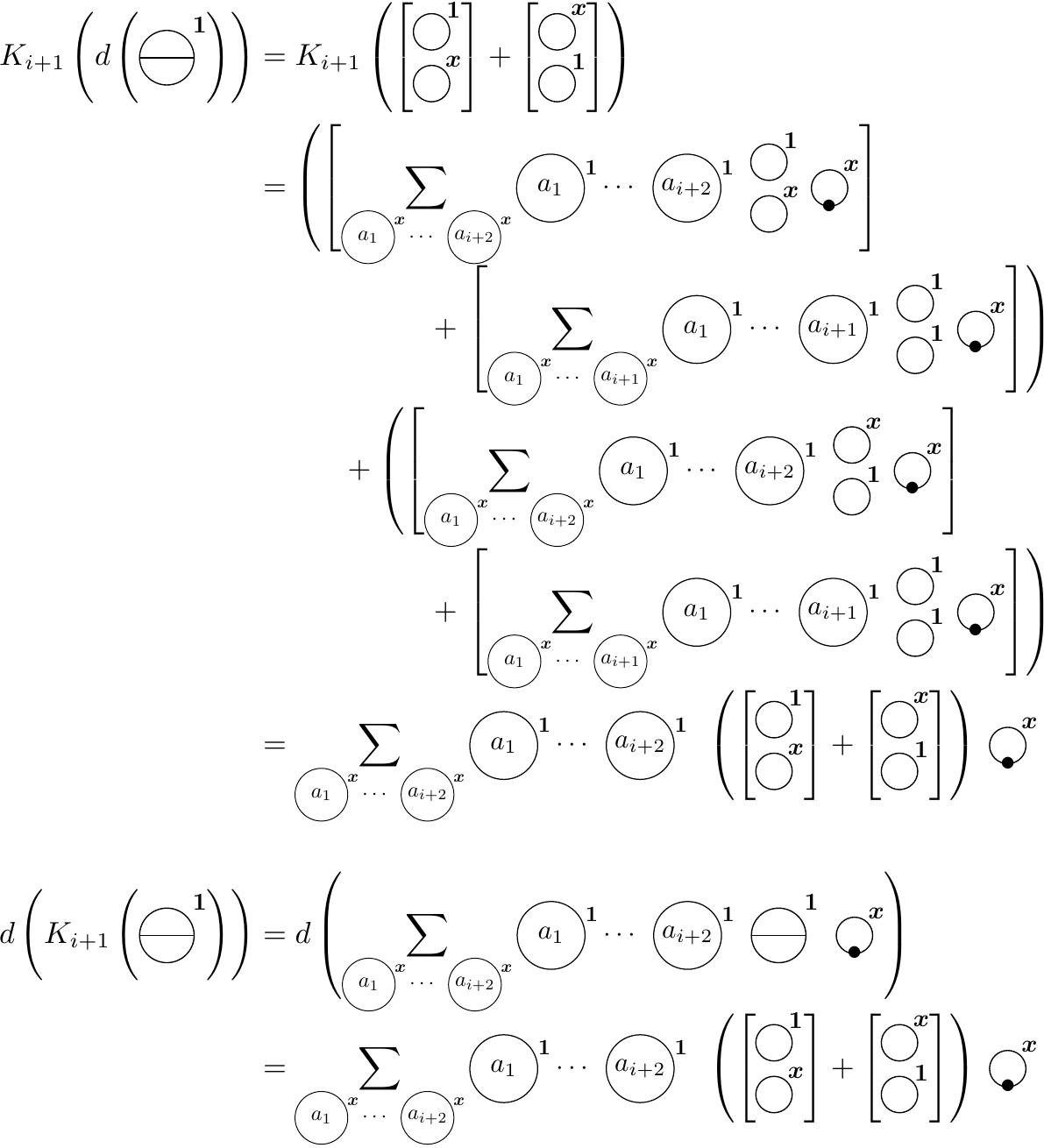}
\end{center}
\begin{center}
\includegraphics{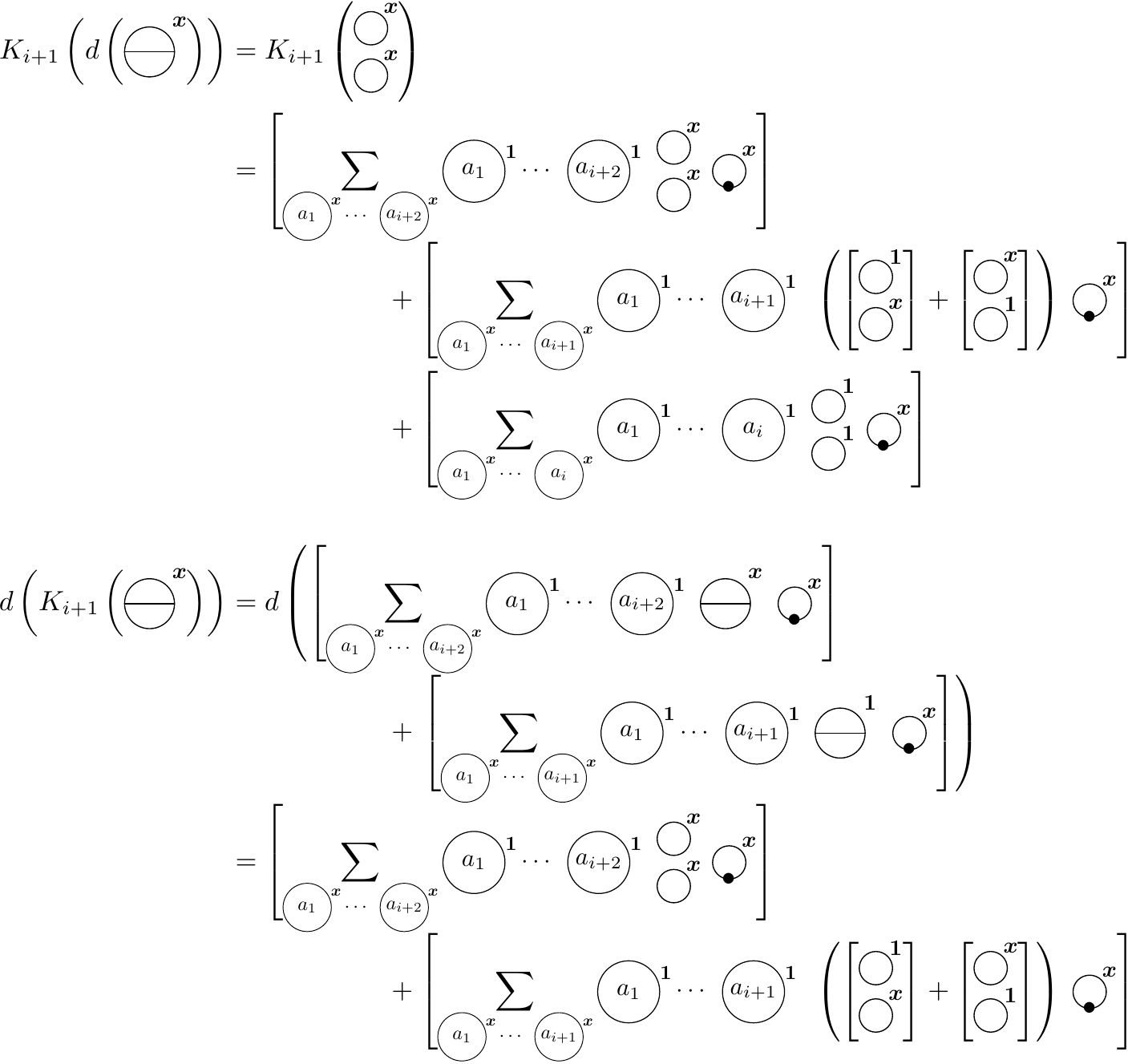}
\end{center}
Therefore, the only term that remains from the splits is
\begin{center}
\includegraphics{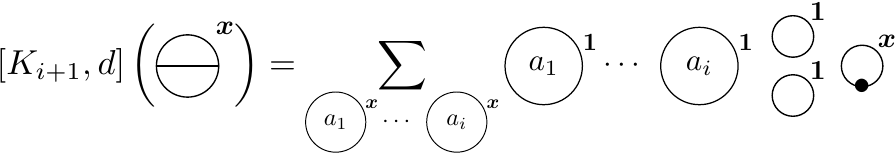}
\end{center}
which precisely equals (3).

Next, we check the merges:
\begin{center}
\includegraphics{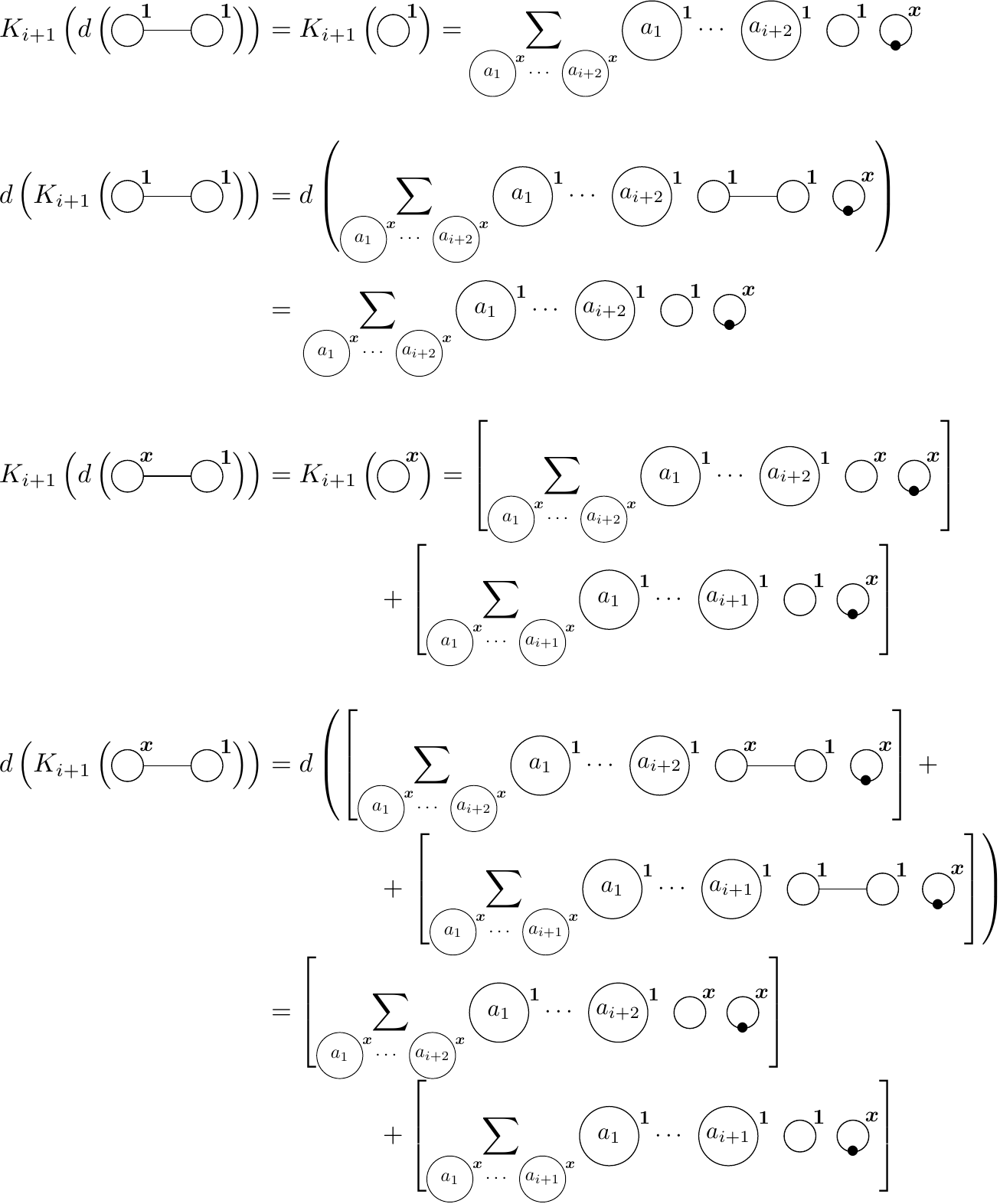}
\end{center}
\begin{center}
\includegraphics{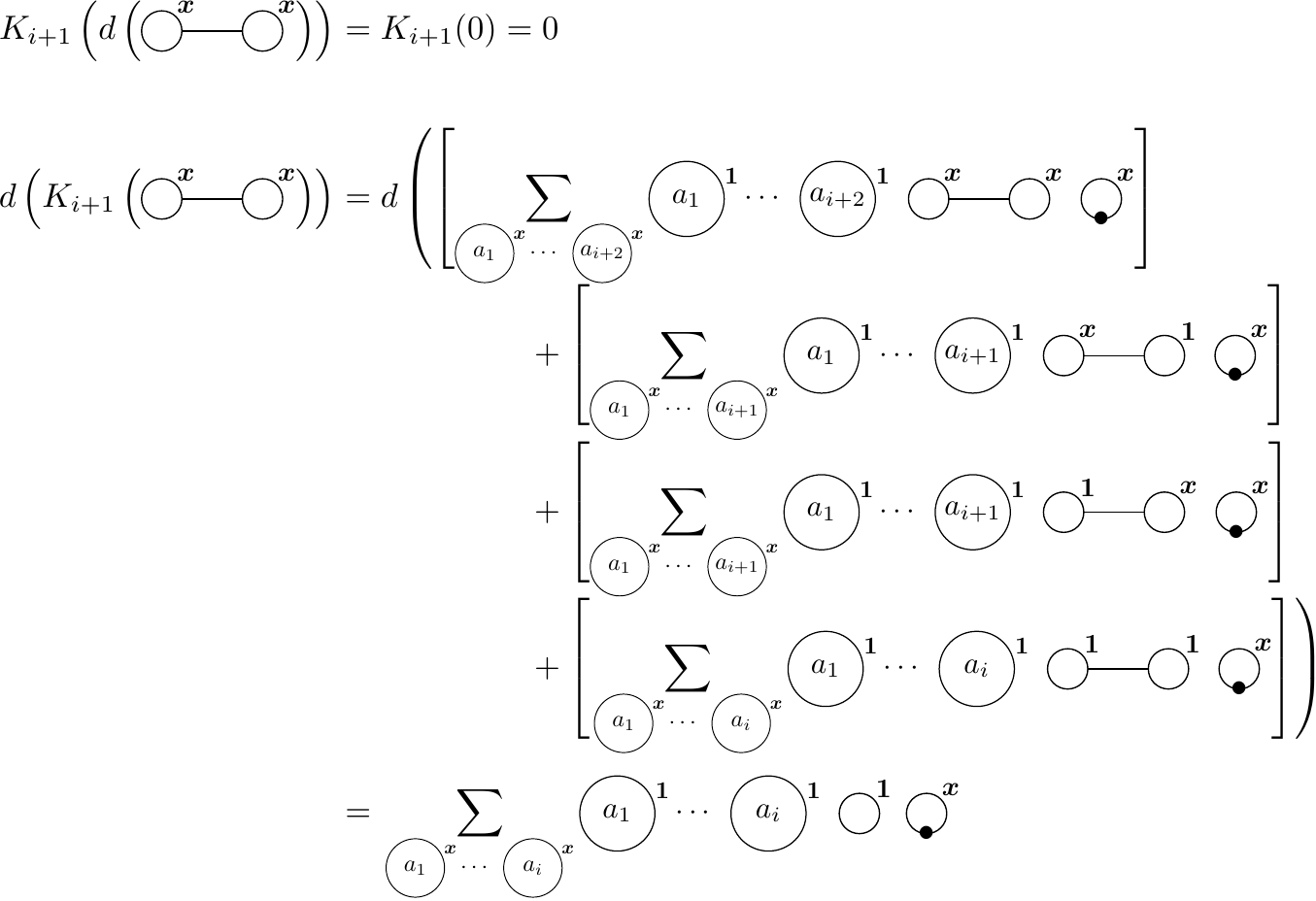}
\end{center}
This precisely cancels (4). Thus, we see that
\[[K_i,h]=[K_{i+1},d]\]
on any configuration. This means that if we define $K=K_0+HK_1+H^2 K_2+\dotsb$, then we must have $f=[d+Hh,K]$, which means that the Bar-Natan theory splits as the direct sum of its two reduced versions.

\section{Isomorphism}
So far, we have proved that the Bar-Natan homology splits as a direct sum $H(\C_1)\oplus H(\C_x)$ of the two reduced theories. Moreover, we can say something stronger:
\begin{thm}
As graded chain complexes over $\F_2[H]$, $\C_1 \cong \C_x$. Therefore, the Bar-Natan homology is the direct sum of two copies of the same (that is, isomorphic) reduced theories.
\end{thm}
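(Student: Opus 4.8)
The plan is to write the isomorphism down explicitly, taking advantage of an algebraic feature that distinguishes the Bar-Natan theory from ordinary Khovanov homology. After extending scalars to $\F_2[H]$, the pair $(m+Hm',\ \Delta+H\Delta')$ makes $W$ into the commutative Frobenius algebra $A=\F_2[H][x]/(x^2+Hx)$, with $\1\leftrightarrow 1$, $\x\leftrightarrow x$, unit $\1$, and counit $\epsilon(\1)=0$, $\epsilon(\x)=1$. Unlike the Khovanov algebra $\F_2[x]/(x^2)$, whose only algebra automorphism is the identity, $A$ carries the nontrivial involution $\sigma$ coming from the substitution $x\mapsto x+H$, i.e.\ the $\F_2[H]$-linear map with $\sigma(\1)=\1$ and $\sigma(\x)=\x+H\1$: one checks immediately that $\sigma$ preserves the relation $x^2=Hx$ and fixes the unit and the counit, hence is an automorphism of the whole Frobenius structure, and that it is homogeneous for the quantum grading since $\x$ and $H\1$ occupy the same $q$-degree in any resolution. (It is precisely the nonexistence of such an automorphism in the Khovanov case that forces a less local argument there.)

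I would then play two maps off each other. Applying $\sigma$ to the label of every circle in every complete resolution gives a chain automorphism $\widetilde\sigma$ of $\C_{BN}$ preserving homological and quantum gradings; since it carries the subcomplex $\C_x$ onto a subcomplex $\C':=\widetilde\sigma(\C_x)$, it descends to an isomorphism of complexes $\C_1=\C_{BN}/\C_x\xrightarrow{\ \sim\ }\C_{BN}/\C'$. Next, let $N\colon\C_{BN}\to\C_{BN}$ be ``multiply the basepoint circle by $x$''. This is a chain map --- the verification is the usual one behind reduced theories: it commutes with edge maps not touching the basepoint for free, a basepoint merge needs $x(ab)=(xa)b$, and a basepoint split needs $\Delta(xa)=(x\otimes 1)\Delta(a)$, i.e.\ that comultiplication is a map of $A$-modules. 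Now $N$ is onto $\C_x$ (each basepoint-$\x$ resolution is $N$ of the corresponding basepoint-$\1$ resolution), and a short computation identifies $\ker N$ with $\C'$ (both are spanned over $\F_2[H]$ by the elements carrying $\x+H\1$ on the basepoint circle and an arbitrary labelling of the remaining circles). Hence $N$ induces an isomorphism of complexes $\C_{BN}/\C'\xrightarrow{\ \sim\ }\C_x$, and composing with the previous one gives $\C_1\cong\C_x$ as complexes of $\F_2[H]$-modules.

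Concretely, the composite sends a resolution with the basepoint circle labelled $\1$ to the same resolution with the basepoint relabelled $\x$ and with every other circle's label $\sigma$-twisted (so each $\x$ on a non-basepoint circle becomes $\x+H\1$), and its inverse relabels the basepoint $\x\to\1$ and again $\sigma$-twists the rest. Setting $H=0$ this reduces to the basepoint relabelling, which is already a chain isomorphism $\C_1\cong\C_x$ in the Khovanov setting; one sees directly why that map alone fails over $\F_2[H]$ --- on a basepoint split, or a merge of the basepoint circle with an $\x$-circle, it is off by an $H$-term --- and why $\sigma$-twisting the remaining circles repairs exactly this discrepancy. Tracking gradings, the step ``multiply by $x$'' drops $q$ by $2$, so the isomorphism shifts the quantum grading by $2$; that is, $\C_1\cong\C_x$ up to the usual overall $q$-shift between the two reduced theories, which one then absorbs into whatever normalisation is in force. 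As an alternative to the module-theoretic route, one could take this explicit map as a definition and verify it is a chain isomorphism by running through the $1$-dimensional merge and split configurations, with and without the basepoint, exactly as in the verification of $f=[K_0,d]$.

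The parts carrying real content --- everything else being formal --- are the check that $N$ is a chain map at a crossing where the basepoint circle is itself merged or split, and the identification $\ker N=\widetilde\sigma(\C_x)$; both are short but must be done with the Bar-Natan edge maps explicitly in hand, since it is exactly the $H$-terms in $m+Hm'$ and $\Delta+H\Delta'$ that obstruct the naive map and dictate the $\sigma$-twist.
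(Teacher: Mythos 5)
Your proposal is correct, and it reaches the theorem by a genuinely different route than the paper. The paper takes the map $\iota=I+HK=I+HK_0+H^2K_1+\dotsb$, where $I$ relabels the basepoint circle and $K$ is the nullhomotopy already built in the splitting proof; it verifies $[\iota,d+Hh]=0$ by the local identities $[I,d]=0$, $f=[I,h]$ and $[K,d+Hh]=f$, and then gets bijectivity by injectivity of the $H^0$-part plus a dimension count in each quantum grading. You instead exploit the structural fact that the Bar-Natan Frobenius algebra $\F_2[H][x]/(x^2+Hx)$ has the nontrivial automorphism $\sigma\colon \x\mapsto\x+H\1$ (absent in the Khovanov algebra), giving a chain automorphism $\widetilde\sigma$ of $\C_{BN}$, and combine it with the basepoint-action map $N$ to obtain $\C_1\cong\C_{BN}/\C_x\cong\C_{BN}/\widetilde\sigma(\C_x)\cong\C_x$; your checks ($N$ a chain map via $\Delta(xa)=(x\otimes\1)\Delta(a)$ and $x(ab)=(xa)b$, and $\ker N=\widetilde\sigma(\C_x)$ since $N(\x+H\1)=H\x+H\x=0$) all go through for the Bar-Natan edge maps. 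In fact your composite, which sends a basepoint-$\1$ generator to the basepoint-$\x$ generator with every other $\x$ replaced by $\x+H\1$, expands (summing over subsets of $\x$-labelled circles) to exactly the paper's $\iota$, and it has the same degree $-2$ shift in $q$ that the paper's map has, so your grading caveat matches the paper's implicit convention. What the two approaches buy: the paper's argument is self-contained given the $K_i$'s and needs no extra algebra, but is a configuration-by-configuration verification plus a separate surjectivity argument; yours explains conceptually why the isomorphism exists, makes bijectivity automatic (a composite of isomorphisms), and is independent of the homotopy $K$, at the cost of introducing the Frobenius-algebra viewpoint the paper never makes explicit.
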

\begin{proof}
We wish to construct a graded $\F_2[H]$-module isomorphism of chain complexes between $\C_1$ and $\C_x$. To do this, we first recall that in the ordinary Khovanov case of \cite{shumakovitch}, this isomorphism is given by the map $I:\C_1 \to \C_x$ that simply relabels the basepoint circle from $\1$ to $\x$. This is clearly an isomorphism (as it maps the basis vectors of $\C_1$ to the basis vectors of $\C_x$). Moreover, it is straightforward to check that $I$ is a chain map for the Khovanov differential, meaning that $[I,d]=0$.

For the Bar-Natan case, we define $\iota:C_1 \to C_x$ by
\[\iota:=I+HK=I+HK_0+H^2K_1+H^3K_2+\dotsb\]
First, we show that $\iota$ is a chain map, which means that it commutes with the differential. So we want to show that $[\iota,d+Hh]=0$. However, we have that
\begin{align*}
[\iota,d+Hh]&=[I+HK,d+Hh]\\
&=[I,d+Hh]+[HK,d+Hh]\\
&=[I,d]+[I,Hh]+Hf
\end{align*}
where the last step uses what we proved above, namely that $[K,d+Hh]=f$. Since $[I,d]=0$, it suffices to prove that
\[f=[I,h]\]
We proceed as in Section \ref{k0}. For the split maps, we have
\begin{center}
\includegraphics{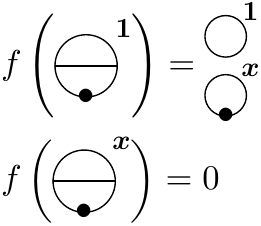}
\end{center}
and
\begin{center}
\includegraphics{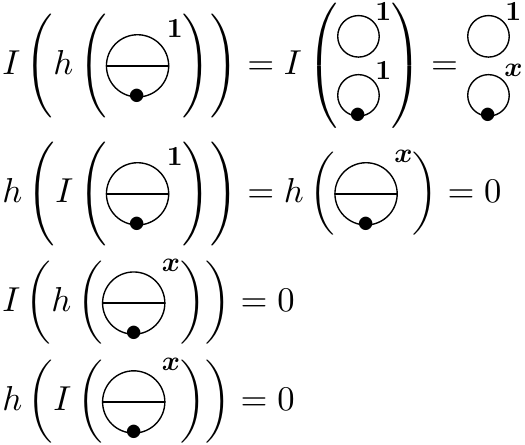}
\end{center}
so $f=[I,h]$ for the splits. For the merges, we have
\begin{center}
\includegraphics{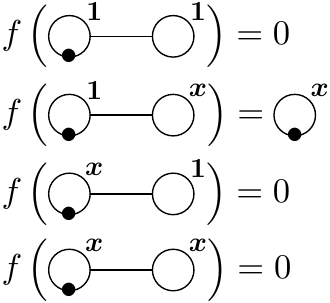}
\end{center}
For the first three of these merges, $h$ will evaluate to $0$. So all we need to check is that $h \circ I$ agrees with $f$ on them. Indeed,
\begin{center}
\includegraphics{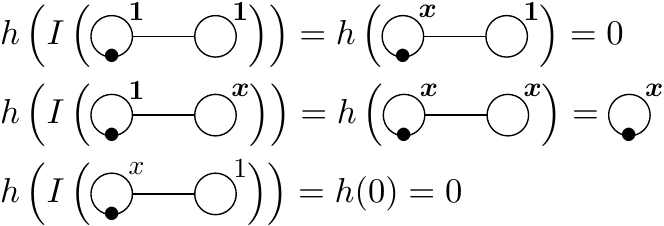}
\end{center}
For the final merge, we have that $I$ will evaluate to $0$, so we simply need to check that $I \circ h$ evaluates to $0$ as well. Indeed,
\begin{center}
\includegraphics{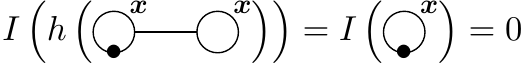}
\end{center}
Thus, we see that $f=[I,h]$. Note that unlike in the previous sections, we don't need to check that this equality holds on all surgeries not involving the basepoint circle, since $f$, $I$, and $h$ are all defined locally. So we see that the identity $f=[I,h]$ holds always, which means that $\iota$ is indeed a chain map.

Now, we need to check that $\iota$ is an isomorphism of chain complexes. We first prove that $\iota$ is injective. For suppose that we had some $\alpha \in \ker \iota$. That means that each homogeneous component of $\iota(\alpha)$ is $0$, so in particular $I(\alpha)=0$. But $I$ is injective, so we must have that $\alpha=0$. Therefore, $\iota$ is injective as well. 

Proving directly that $\iota$ is surjective is significantly more tedious, since $\iota$ is defined in terms of various sums. Luckily, we don't need to do this. For we note that since $H$ has grading $-2$, we must have that $\iota$ is a homogeneous map that always decreases quantum grading by $2$. Moreover, at each fixed quantum grading, $\C_1$ and $\C_x$ are finite-dimensional $\F_2$-vector spaces of the same dimension, since there is a bijection between their bases. So at each fixed quantum grading, $\iota$ is an injective homomorphism between two vector spaces of the same dimension, so it must be surjective as well. Since this is true for all quantum gradings, $\iota$ itself must be surjective. So $\iota$ is bijective, and as it was an $\F_2[H]$-module chain map, its inverse will be an $\F_2[H]$-module chain map as well. So $\iota$ is a graded $\F_2[H]$-module isomorphism of chain complexes, so $\C_1 \cong \C_x$. 
\end{proof}

\bibliographystyle{plain}
\bibliography{khovanovpapers}

\end{document}